\newcommand{\R}{\mathbb{R}}
\newcommand{\un}{\mathbf{1}\!\!{\rm I}} 
\newcommand{\be}{\begin{equation}} 
\newcommand{\ee}{\end{equation}}
\newcommand{\bea}{\begin{eqnarray}} 
\newcommand{\eea}{\end{eqnarray}}
\newcommand{\bean}{\begin{eqnarray*}} 
\newcommand{\eean}{\end{eqnarray*}}
\newcommand{\rf}[1]{(\ref {#1})}
\def\dx{\,{\rm d}x}
\def\dy{\,{\rm d}y}
\def\dt{\,{\rm d}t}
\def\dr{\,{\rm d}r}
\def\dl{\,{\rm d}\lambda}
\def\e{\varepsilon}
\def\eps{\varepsilon}
\def\s{\sigma}
\def\g{\gamma}
\def\r{\varrho}
\def\xn{|\!|\!|} 
\def\mn{|\!\!|}
\def\mnp{|\!\!|_{M^{d/\alpha}}}
\def\mn2{|\!\!|_{M^{d/2}}}
\def\a{\alpha}
\def\da{\frac{d}{\a}}
\newtheorem{theorem}{Theorem}
\newtheorem{proposition}[theorem]{Proposition}
\newtheorem{lemma}[theorem]{Lemma}
\newtheorem{corollary}[theorem]{Corollary}
\theoremstyle{definition}
\theoremstyle{remark}
\newtheorem{remark}[theorem]{Remark}
\numberwithin{equation}{section}
\numberwithin{theorem}{section}
\author[P. Biler]{Piotr Biler}
\address{\small Instytut Matematyczny, Uniwersytet Wroc\l awski,
 pl. Grunwaldzki 2/4, \hbox{50-384} Wroc\-\l aw, Poland}
\email{Piotr.Biler@math.uni.wroc.pl}
\author[J. Zienkiewicz]{Jacek Zienkiewicz}
\address{\small 
 Instytut Matematyczny, Uniwersytet Wroc\l awski,
 pl. Grunwaldzki 2/4, \hbox{50-384} Wroc\-\l aw, Poland}
\email{Jacek.Zienkiewicz@math.uni.wroc.pl}
\title[Solutions of  chemotaxis model]{Blowing up radial solutions\\ in the minimal Keller--Segel model of chemotaxis} 
\begin{document}

\begin{abstract} 
We consider the simplest parabolic-elliptic model of  chemotaxis in the whole space in several dimensions. 
Criteria   for the  blowup of radially symmetric solutions  in terms of suitable Morrey spaces norms are derived. 
\end{abstract}

\keywords{chemotaxis, blowup of solutions, radial solutions}

\subjclass[2010]{35Q92, 35B44}

\date{\today}

\thanks{  
The  first named author  was partially supported by the NCN grant 
2016/23/B/ST1/00434.  
He thanks  Ignacio Guerra  for interesting conversations. 
The second author was  supported by the grant  UMO-2014/15/B/ST1/00060. 
We thank the referee for pertinent remarks. 
 \\  J. Evolution Equations, https://doi.org/10.1007/s00028-018-0469-8}

\maketitle

\baselineskip=21pt

\section{Introduction and main results}
We consider in this paper solutions that blow up in a finite time for  the   Cauchy problem in space dimensions  $d\ge 2$
\begin{align}
u_t-\Delta u+\nabla\cdot(u\nabla v)&= 0,\ \ &x\in {\mathbb R}^d,\ t>0,\label{equ}\\ 
\Delta v+u &=  0,\ \  & x\in {\mathbb R}^d,\ t>0,\label{eqv}\\
u(x,0)&= u_0(x)\ge0,\ \ &x\in {\mathbb R}^d.\label{ini}
\end{align}
One motivation to study this model comes from Mathematical Biology, where equations \rf{equ}--\rf{eqv} are a simplified (the, so-called, minimal) Keller-Segel system modelling chemotaxis, see e.g. \cite{B-AMSA,BCKSV,Lem,MS,MS2}. 
The unknown variables $u=u(x,t)$ and $v=v(x,t)$ denote the density of the population of microorganisms (e.g. swimming bacteria or slime mold),  and the density of the chemical secreted by themselves that attracts them and makes them to aggregate, respectively. 

Another important interpretation of system \rf{equ}--\rf{eqv} comes from Astrophysics, where the unknown function $u=u(x,t)$ is the density of gravitationally interacting massive particles (micro- as well as macro-) in a cloud (of atoms, molecules, dust, stars, nebulae, etc.), and $v=v(x,t)$ is the Newtonian potential (``self-consistent mean field'') of the mass distribution $u$, see \cite{Cha,CSR,B-SM,B-CM,B-AM,BHN}. 
Aggregation of those particles may lead to formation of singularities (an implosion of mass phenomenon) in finite time. 

Even if in applications $u_0\in L^1(\R^d)$, and then mass 
$$
M=\int_{\R^d}u_0(x)\dx=\int_{\R^d} u(x,t)\dx\ \ \ {\rm for\ all\ \ \ }t\in[0,T_{\rm max})
$$ 
is conserved,  we will also consider locally integrable solutions with infinite mass  like the famous Chandrasekhar steady state singular solution in \cite{Cha,CSR} for $d\ge 3$
\be
u_C(x)=\frac{2(d-2)}{|x|^2}.\label{Ch}
\ee

Our results include 
\begin{itemize} 
\item
 sufficient conditions on radial initial data which lead to a finite time blowup of solutions 
\end{itemize}
 expressed in terms of quantities related to the Morrey space norm $M^{d/2}(\R^d)$ in Theorem \ref{blth}.  
For instance, condition \rf{Wc}: $\sup_{T>0}T{\rm e}^{T\Delta}u_0(0)>C(d)$ for some $C(d)\in[1,2]$ is sufficient for the blowup of solution with the initial condition $u_0$. 
Sufficient blowup conditions expressed in terms of the radial concentration \rf{2conc}: $\xn u_0\xn>{\mathcal N}$, together with an asymptotics of the number $\mathcal N$ as $d\to\infty$, are also in Proposition \ref{thr-2}.  

Similar results for the system with modified diffusion operator 
\bea
u_t+(-\Delta)^{\alpha/2}u+\nabla\cdot(u\nabla v)&=&0,\ \ x\in {\mathbb R}^d,\ t>0,\label{equ-a}\\ 
\Delta v+u &=& 0,\ \   x\in {\mathbb R}^d,\ t>0,\label{eqv-a}
\eea
supplemented with the initial condition 
\be
u(x,0)=u_0(x)\ge 0\label{ini-a}
\ee 
will be derived and discussed in Section 3. 
In a parallel way we have also

\begin{itemize}
\item 
blowup of radial solutions with large initial data (Theorem \ref{bl-a},  $\a\in(0,2)$);

\item
a reformulation of sufficient condition for blowup of radial solutions in terms of Morrey space $M^{d/\a}(\R^d)$ norm  (Proposition \ref{comp-a}); 
\end{itemize}

For the proof of the  main result, we revisit  a classical argument of H. Fujita (applied to the nonlinear heat equation in \cite{Fu}) and reminiscent of ideas in \cite{BCKSV}, which leads to a~sufficient condition for blowup of radially symmetric solutions of system \rf{equ}--\rf{eqv}, with a significant improvement compared to  \cite{BKZ2} where local moments of solutions have been employed. Then, we derive as corollaries of condition \rf{Wc} other criteria for blowup of solutions of \rf{equ}--\rf{ini}.

\noindent 
{\bf Notation.}  
The $L^p(\R^d)$ norm is denoted by $\|\, .\, \|_p$, $1\le p\le\infty$. 
The homogeneous Morrey spaces of measures on  $\R^d$ are defined by their norms 
\be
|\!\!| u|\!\!|_{M^p}\equiv  \sup_{R>0,\, x\in\R^d}R^{d(1/p-1)} 
\int_{\{|y-x|<R\}}|u(y)|\dy<\infty.\label{hMor}
\ee  
The radial concentration $\xn \, .\, \xn$ will  denote the quantity 
\be 
\xn u\xn \equiv   \sup_{R>0}R^{2-d}\int_{\{|y|<R\}}u(y)\dy,\label{2conc}
\ee 
and this quantity is equivalent to the norm in the space $M^{d/2}(\R^d)$ critical for system \rf{equ}--\rf{eqv}.  
We need in Section 3 another quantity which we call $\tfrac{d}{\a}$-radial concentration
\be 
\xn u\xn_{\da} \equiv   \sup_{R>0}R^{\a-d}\int_{\{|y|<R\}}u(y)\dy. \label{aconc}
\ee 
Evidently, $\xn\ .\ \xn\equiv\xn\ .\ \xn_{\frac{d}{2}}$.

By a direct calculation, we have 
$$
2\s_d=R^{2-d}\int_{\{|x|<R\}}u_C(x)\dx\ \ \ {\rm for\ each}\ \ \ R>0.
$$
Here, as usual, 
\be
\s_d=\frac{2\pi^{\frac{d}{2}}}{\Gamma\left(\frac{d}{2}\right)}
\label{pole}
\ee 
denotes  the area of the unit sphere $\mathbb S^{d-1}$ in $\R^d$.

The relation $f\approx g$ means that $\lim_{s\to\infty}\frac{f(s)}{g(s)}=1$ and  
$f\lessapprox g$ means: $\limsup_{s\to\infty}\frac{f(s)}{g(s)}\le 1$, $f\asymp g$ is used whenever $\lim_{s\to\infty}\frac{f(s)}{g(s)}\in (0,\infty)$.

\section{ Solutions blowing up in a finite time} \label{blowing}

It is well-known that if $d=2$, the condition leading to  a finite time blowup, i.e. 
$$\limsup_{t\nearrow T,\, x\in\R^d}  u(x,t) =\infty\ \ {\rm for\ some}\ \ 0<T<\infty,$$    
 is expressed in terms of mass,  that is   $M>8\pi$, see   e.g. \cite{B-CM,BKZ,BKZ-NHM}. 

If $d\ge 3$,  a sufficient condition for blowup  for an initial condition (not necessarily radial)   is that $u_0$ is highly concentrated, namely
\be
\left(\frac{\int_{ \R^d} |x|^\gamma u_0(x) \dx}{\int_{ \R^d}u_0(x)\dx}\right)^{\frac{d-2}{\gamma}}\le \tilde c_{d,\g} M, \label{bl-JEE}
\ee
for some  $0<\gamma\le 2$ and  a (small, explicit) constant $\tilde c_{d,\g}>0$, see \cite[(2.4)]{BK-JEE}. 
Since $$|\!\!| u_0|\!\!|_{M^{d/2}}\ge \tilde C_{d,\g}M\left(\frac{M}{\int_{\R^d}|x|^\g u_0(x)\dx}\right)^{\frac{d-2}{\g}}$$
for some constant $\tilde C_{d,\g}>0$ and all $u_0\in M^{d/2}\cap L^1$, see \cite[(2.6)]{BK-JEE}, this means that the Morrey space $M^{d/2}$ norm of $u_0$ satisfying condition \rf{bl-JEE} must be (very!) large: $$|\!\!| u_0|\!\!|_{M^{d/2}}\ge \frac{\tilde C_{d,2}}{\tilde c_{d,2}}.$$ 
According to \cite{B-CM},  $\tilde c_{d,2}=\left(2^{d/2}d\s_d\right)^{-1}$ and $\frac{\tilde C_{d,2}}{\tilde c_{d,2}}=\left(\frac{d-2}{d}\right)^{d/2-1}2^{d/2}\s_d\approx \frac{2^{d/2}}{\rm e}\s_d$.

Recently, some new results on the blowup of solutions to problem \rf{equ}--\rf{ini} appeared in \cite{K-O,BKZ,BKZ-NHM,BCKZ,BKZ2} with a new strategy of the  proofs involving local momenta of (most frequently) radial solutions, and with improved sufficient conditions in terms of the initial datum $u_0$.  
We will apply the classical proof of blowup in the seminal paper \cite{Fu} by H.~Fujita, and then improve  the sufficient conditions for the blowup expressed in terms of a~functional norm of $u_0$. 

First, we note a general property of potentials of radial functions

\begin{lemma}\label{potential}
Let $u\in L^1_{\rm loc}(\R^d)$ be a radially symmetric function,  
 such that  $v=E_d\ast u$ with $E_2(x)=-\frac1{2\pi}\log|x|$ and $E_d(x)=\frac1{(d-2)\sigma_d}|x|^{2-d}$ for $d\ge 3$, solves the Poisson equation $\Delta v+u=0$. Then the identity 
$$
\nabla v(x)\cdot x=-\frac1{\sigma_d}|x|^{2-d}\int_{\{|y|\le |x|\}} u(y)\dy
$$
holds. 
\end{lemma}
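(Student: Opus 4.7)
The plan is to exploit radial symmetry together with Gauss's divergence theorem (equivalently, the integral form of Newton's shell theorem).

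First, I would observe that since $E_d$ is radial and $u$ is radial, the potential $v=E_d\ast u$ is also radially symmetric, so I can write $v(x)=V(|x|)$ for some function $V$ of one variable. A direct chain-rule computation then gives
$$
\nabla v(x)\cdot x = V'(|x|)\,\frac{x}{|x|}\cdot x = |x|\,V'(|x|).
$$
Hence the claim reduces to showing that
$$
V'(R) = -\frac{1}{\sigma_d}\,R^{1-d}\int_{\{|y|\le R\}} u(y)\dy\qquad\text{for } R=|x|.
$$

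Next, I would integrate the Poisson equation $\Delta v + u = 0$ over the ball $B_R=\{|y|\le R\}$ and apply the divergence theorem:
$$
-\int_{B_R} u(y)\dy = \int_{B_R}\Delta v(y)\dy = \int_{\{|y|=R\}}\nabla v(y)\cdot\frac{y}{|y|}\,\mathrm{d}\sigma(y).
$$
On the sphere $\{|y|=R\}$ the integrand equals $V'(R)$, a constant, so the surface integral is $V'(R)\,\sigma_d R^{d-1}$. Solving for $V'(R)$ and multiplying by $|x|=R$ yields exactly the desired identity.

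The only subtle point is justifying the divergence theorem under the minimal hypothesis $u\in L^1_{\rm loc}$, since then $v$ is a priori only a distributional solution. The clean way around this is to approximate $u$ by a standard radial mollification $u_\eps=u\ast\eta_\eps$; the corresponding $v_\eps=E_d\ast u_\eps$ is smooth and radial, the identity holds for $v_\eps$ by the above, and then one passes to the limit $\eps\to 0$ using the fact that $\int_{B_R}u_\eps\to\int_{B_R}u$ for a.e. $R>0$ and that $\nabla v_\eps\to\nabla v$ in $L^1_{\rm loc}$ (so $V'_\eps(R)\to V'(R)$ for a.e. $R$ via one-dimensional representations). Since both sides of the claimed identity depend continuously/monotonically on $R$ (the right-hand side is a nondecreasing function of $R$ divided by $R^{d-2}$ up to sign, and $\nabla v\cdot x$ is, by the same radial reduction, a well-defined locally integrable radial quantity), the a.e.\ identity upgrades to one valid for all $x\ne 0$. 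This mollification/passage-to-the-limit step is the only real obstacle; the algebraic core of the proof is a one-line application of Gauss's theorem.
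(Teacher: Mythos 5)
Your proof is correct and follows essentially the same route as the paper: both reduce the claim to Gauss's divergence theorem applied to $\Delta v = -u$ over the ball $B_R$, using radial symmetry to identify the constant value of $\nabla v\cdot\frac{y}{|y|}$ on the sphere $\{|y|=R\}$. Your extra mollification step to justify the divergence theorem for merely $L^1_{\rm loc}$ data is a reasonable added precaution that the paper omits, but it does not change the substance of the argument.
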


\begin{proof}
By the Gauss theorem,  
we have for the distribution function $M$ of $u$
\be
M(R)\equiv \int_{\{|y|\le R\}} u(y)\dy=
-\int_{\{|y|=R\}} \nabla v(y)\cdot\frac{y}{|y|}\,{\rm d}S.\label{distr-u}
\ee
Thus,  for the radial function $\nabla v(x)\cdot\frac{x}{|x|}$ and $|x|=R$,  we obtain the required identity
$$\nabla v(x)\cdot x=
\frac{1}{\sigma_d}R^{2-d}\int_{\{|y|=R\}}\nabla v(y)\cdot\frac{y}{|y|}\,{\rm d}S = -\frac1{\sigma_d}R^{2-d}M(R).$$ 
 \end{proof}

Now, we proceed to apply the classical idea of blowup proof in  \cite{Fu}.  

\begin{theorem}\label{blth}
 Let $d\ge 2$. 
If the inequality $T{\rm e}^{T\Delta}u_0(0) >C(d)$ holds with an explicit constant $C(d)\in[1,2]$, see \rf{Cd} below, then every radial (either classical or weak) solution of problem \rf{equ}--\rf{ini} 
 which exists on $[0,T]$ blows up in $L^\infty$ 
not later than $t=T$, i.e. $\lim_{t\nearrow T}\|u(t)\|_\infty=\infty$. 
\end{theorem}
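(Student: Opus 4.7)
\smallskip
\noindent\textbf{Proof plan.} The plan is to adapt to the Keller--Segel dynamics the classical moment argument of Fujita \cite{Fu}. For the target time $T$ appearing in the hypothesis, introduce the backward heat--smoothed quantity
\be
w(t)\equiv \bigl({\rm e}^{(T-t)\Delta}u(\cdot,t)\bigr)(0)=\int_{\R^d}G_{T-t}(x)\,u(x,t)\dx,\qquad 0\le t<T,\label{wdef-pl}
\ee
with $G_s(x)=(4\pi s)^{-d/2}{\rm e}^{-|x|^2/(4s)}$, so that $w(0)={\rm e}^{T\Delta}u_0(0)$ and the hypothesis reads $Tw(0)>C(d)$. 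The goal is to show that $w$ satisfies an autonomous Bernoulli-type differential inequality that forces blowup by time $T$.

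Differentiating \rf{wdef-pl} and using $\p_t G_{T-t}=-\Delta_x G_{T-t}$ together with $\p_t u=\Delta u-\div(u\na v)$, the linear diffusion cancels by Green's identity and one is left with
$$
w'(t)=\int_{\R^d}\na G_{T-t}\cdot u\na v\dx=-\frac{1}{2(T-t)}\int_{\R^d}G_{T-t}(x)\,u(x,t)\,x\cdot\na v(x,t)\dx.
$$
Radial symmetry now enters through Lemma \ref{potential}, which gives $x\cdot\na v=-\s_d^{-1}|x|^{2-d}M(|x|,t)$ with $M(R,t):=\int_{\{|y|\le R\}}u(y,t)\dy$. Passing to polar coordinates, rewriting $u(r,t)M(r,t)\dr=(2\s_d r^{d-1})^{-1}\ud(M^2)$, and integrating by parts in $r$ --- the boundary terms vanish since $M(r,t)\sim\tfrac{\s_d}{d}u(0,t)r^d$ as $r\to 0$ and $G_{T-t}$ has Gaussian decay as $r\to\infty$ --- one arrives at the manifestly nonnegative expression
$$
w'(t)=\frac{1}{4\s_d(T-t)}\int_0^\infty M(r,t)^2\,G_{T-t}(r)\,r^{1-d}\left[\frac{r^2}{2(T-t)}+(d-2)\right]\dr.
$$

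To close the loop, I would apply Cauchy--Schwarz twice to the complementary representation $w(t)=\frac{1}{2(T-t)}\int_0^\infty r\,G_{T-t}(r)M(r,t)\dr$ (obtained by one integration by parts from $\int_0^\infty G_{T-t}\ud M$), splitting the integrand $r\,G\,M$ with two different weight exponents tailored so that the Gaussian moments
$$
\int_0^\infty G_{T-t}(r)\,r^{d-1}\dr=\s_d^{-1}\quad\text{and}\quad\int_0^\infty G_{T-t}(r)\,r^{d+1}\dr=\frac{2d(T-t)}{\s_d}
$$
match the two summands of the bracket. The first summand then contributes $\tfrac12 w(t)^2$ and the second $\tfrac{d-2}{2d}w(t)^2$, so that
$$
w'(t)\ \ge\ \frac{d-1}{d}\,w(t)^2.
$$
Integrating this Bernoulli inequality yields $w(t)\ge w(0)\bigl(1-\tfrac{d-1}{d}tw(0)\bigr)^{-1}$, which becomes infinite by $t^\ast=d/((d-1)w(0))$; the condition $t^\ast<T$ reads $Tw(0)>d/(d-1)$, so one may take $C(d)=d/(d-1)\in(1,2]$, matching the range announced in the theorem.

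The main technical obstacle is not the algebra above but the rigorous justification of $w\in C^1([0,T))$ and of the $r$-integration by parts for the weak class of locally integrable, possibly infinite-mass radial data considered here (for which the Chandrasekhar profile $u_C$ of \rf{Ch} serves as a sharp benchmark: a direct computation gives $T{\rm e}^{T\Delta}u_C(0)\equiv 1<C(d)$, consistent with $u_C$ being a stationary solution and not blowing up). This I would address by regularizing $u_0$, deriving $w'\ge\tfrac{d-1}{d}w^2$ along a sequence of smooth approximate solutions, and passing to the limit in both the pointwise value of $w$ and the differential inequality.
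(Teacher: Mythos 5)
Your argument is correct and is essentially the paper's proof: the same Fujita-type moment $W(t)=\int G(x,t)u(x,t)\dx$ against the backward Gauss--Weierstrass kernel, the same use of Lemma \ref{potential} to convert $x\cdot\nabla v$ into the radial distribution function $M$, the same integration by parts producing $W'=\frac{1}{4\s_d(T-t)}\int_0^\infty M^2r^{1-d}\left((d-2)+\frac{r^2}{2(T-t)}\right)G\dr$, and the same closure by the Cauchy inequality against $W=\frac{1}{2(T-t)}\int_0^\infty rGM\dr$. The one genuine difference is the final optimization: the paper applies Cauchy--Schwarz once, keeping the full weight $(d-2)+\frac{r^2}{2(T-t)}$ in the denominator of the complementary factor, which yields the integral constant \rf{Cd}; you apply it twice, matching each summand of the bracket to its own Gaussian moment, which yields the cleaner closed form $C(d)=d/(d-1)$. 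Since $(A,B)\mapsto AB/(A+B)$ is concave and $1$-homogeneous, hence superadditive, the paper's constant never exceeds yours (they coincide at $d=2$, where both equal $2$; for instance the paper's $C(3)\approx 1.31<3/2$), so your version is quantitatively slightly weaker; but both constants lie in $(1,2]$ and both tend to $1$ as $d\to\infty$, so the theorem as phrased (``an explicit constant in $[1,2]$'') is established either way --- just note that the statement points specifically to \rf{Cd}, so you would either adopt the single weighted Cauchy--Schwarz or restate the constant. Your benchmark $T{\rm e}^{T\Delta}u_C(0)=1$ is exactly the paper's computation \rf{K2}, and your closing remark about justifying the manipulations for weak solutions by approximation addresses a point the paper itself leaves implicit.
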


\begin{proof} 
For a fixed $T>0$ consider the weight function $G=G(x,t)$, $x\in\R^d$, $t\in[0,T)$, which solves the backward heat equation with the  unit measure as the final time condition 
\bea 
G_t+\Delta G&=&0,\label{heat}\\ 
G(.,T)&=&\delta_0.\label{G-ini}
\eea
Clearly, we have a (unique nonnegative) solution 
\be
G(x,t)=(4\pi(T-t))^{-\frac{d}{2}}\exp\left(-\frac{|x|^2}{4(T-t)}\right), \label{G} 
\ee 
defined by the Gauss-Weierstrass kernel, satisfying $\int G(x,t)\dx =1$, 
so that  
\be
\nabla G(x,t)=-\frac{x}{2(T-t)}G(x,t).\label{gradG}
\ee 
Define for a solution $u$ of \rf{equ}--\rf{eqv} which exists on $[0,T]$ the moment 
\be
W(t)=\int G(x,t)u(x,t)\dx.\label{moment-G}
\ee
Since $G$ decays exponentially fast in $x$ as $|x|\to\infty$, the moment $W$ is well defined (at least) for  solutions $u=u(x,t)$ which are polynomially bounded in $x$. 

The evolution of the moment $W$ is governed by the identity 
\bea
\frac{{\rm d}W}{\dt}&=& \int Gu_t\dx+\int G_tu\dx\nonumber\\
&=&\int(\Delta u-\nabla\cdot(u\nabla v))G\dx-\int \Delta G\, u\dx \nonumber\\
&=&\int \Delta G\, u\dx+\int u\nabla v\cdot\nabla G\dx-\int \Delta G\, u\dx \nonumber\\
&=&-\frac{1}{2(T-t)}\int u\nabla v\cdot xG\dx\nonumber\\
&=&\frac{1}{2\s_d(T-t)}\int u(x,t)M(|x|,t)|x|^{2-d}G(x,t)\dx\label{radG}\\
&=&\frac{\s_d}{2\s_d(T-t)}\int_0^\infty\frac{1}{\s_d}M_r(r,t)r^{1-d}M(r,t)r^{2-d}G(r,t)r^{d-1}\dr\nonumber\\
&=&\frac{1}{2\s_d(T-t)}\int_0^\infty M_rMr^{2-d}G\dr\nonumber\\
&=&-\frac{1}{4\s_d(T-t)}\int_0^\infty M^2(r^{2-d}G)_r\dr,\nonumber\\
&=&\frac{1}{4\s_d(T-t)}\int_0^\infty M^2r^{1-d}\left((d-2)+\frac{r^2}{2(T-t)}\right)G\dr\nonumber
\eea
where we used the radial symmetry of the solution $u$ in \rf{radG}, Lemma \ref{potential} and, of course, the radial symmetry of $G$.
\bigskip
Expressing $W$ in the radial variables we obtain 
\bea
W(t)&=&\s_d\int_0^\infty\frac{1}{\s_d}M_rr^{1-d}Gr^{d-1}\dr\nonumber\\
&=&-\int_0^\infty MG_r\dr\nonumber\\
&=&\int_0^\infty M\frac{r}{2(T-t)}G\dr.\label{radW}
\eea
Now, applying the Cauchy inequality to the quantity \rf{radW}, we get 
\bea
W^2(t)&=&\left(\int_0^\infty M\frac{r}{2(T-t)}G\dr\right)^2\label{W2}\nonumber\\
&\le& \int_0^\infty M^2r^{1-d}\left((d-2)+\frac{r^2}{2(T-t)}\right)G\dr \label{Cauchy}\\ 
&\quad& \times \frac{1}{2(T-t)}\int_0^\infty \frac{r^{d+1}G} {r^2+2(d-2)(T-t)}\dr.\nonumber
\eea
Returning to the time derivative of $W$ in identity \rf{radG}, we arrive at the  differential inequality 
\bea
\frac{{\rm d}W}{\dt}&\ge& \frac{1}{4\s_d(T-t)}W^2(t)\left(\int_0^\infty \frac{r^{d+1}}{2(T-t)}\frac{G}{r^2+2(d-2)(T-t)}\dr\right)^{-1}\nonumber\\ 
&=&\frac{\pi^{\frac{d}{2}}}{8\s_d}W^2(t)\left(\int_0^\infty  \r^{d+1}(2(d-2)+4\r^2)^{-1}{\rm e}^{-\r^2}{\rm d}\r\right)^{-1}\nonumber
\eea
where $\r=\frac{r}{2(T-t)^{\frac12}}$. 
Recalling   \rf{pole}, we  denote 
\be
C(d)=\frac{16}{\Gamma\left(\frac{d}{2}\right)}\int_0^\infty  \r^{d+1}(2(d-2)+4\r^2)^{-1}{\rm e}^{-\r^2}{\rm d}\r.\label{Cd}
\ee
Clearly, $C(2)=2$, and $C(d)<2$ for $d\ge 3$, since we have 
$$
C(d)<\frac{16}{\Gamma\left(\frac{d}{2}\right)} \int_0^\infty\frac14 \r^{d-1}{\rm e}^{-\r^2}{\rm d}\r=\frac{4}{\Gamma\left(\frac{d}{2}\right)}\frac12 \int_0^\infty\tau^{\frac{d}{2}-1}{\rm e}^{-\tau}{\rm d}\tau=2.
$$
Thus,  we finally obtain 
\be
\frac{{\rm d}W}{\dt}\ge\frac{1}{C(d)}W^2(t)\label{evW}
\ee 
which, after an integration, leads to 
\be
W(t)\ge \left(\frac{1}{W(0)}-\frac{t}{C(d)}\right)^{-1}.\label{contr}
\ee
Now, it is clear that if 
\be
W(0)={\rm e}^{T\Delta}u_0(0)> \frac{C(d)}{T},\label{Wc}
\ee
then  $\limsup_{t\nearrow T}W(t)=\infty$ which means: $\limsup_{t\nearrow T,\, x\in\R^d}u(x,t)=\infty$, a contradiction with the existence of a locally bounded solution $u$ on $(0,T]$. 
\end{proof} 

\begin{remark}\label{rate}
The blowup rate is such that $\liminf_{t\nearrow T}(T-t)W(t)>0$. 
Indeed, 
$$\frac{1}{W(0)}-\frac{1}{W(t)}\ge \frac{t}{C(d)},$$
so if $W(0)\ge \frac{C(d)}{T}$ then 
$$W(t)\ge \frac{1}{\frac{1}{W(0)}-\frac{t}{C(d)}}\ge \frac{C(d)}{T-t}.$$ 
For other results on blowup rates (e.g. a faster blowup, the so-called, type II blowup), see \cite{GMS,MS,MS2}. 
\end{remark} 
 
\begin{remark} \label{8pi}
For $d=2$ and radially symmetric nonnegative measures $u_0$, by identity \rf{distr-u} and $C(2)=2$, condition \rf{Wc} after the integration by parts reads 
$$
\sup_{T>0}\frac{1}{4\pi}\int_0^\infty \frac{\partial}{\partial r}\left({\rm e}^{-r^2/4T}\right)M(r)\dr >2
$$
for the radial distribution function $M$ of the initial condition $u_0$. 
This means: \newline $\sup_{r>0}M(r)>8\pi$, 
and the well known blowup condition for radially symmetric solutions in $\R^2$  is recovered. 
\end{remark}

Observe that the equality  in the Cauchy inequality \rf{Cauchy} holds if and only if 
$$
0\le M(r,t)=\frac{A(t)r^d}{r^2+2(d-2)(T-t)}
=(T-t)^{\frac{d}{2}-1}\frac{A(t)2^d\r^d}{4\r^2+2(d-2)}\ \ \ {\textrm{with\ some}} \ \ \ A(t)\ge 0.
$$
Then inequality \rf{contr} reads 
\be
W(t)= \left(\frac{1}{W(0)}-\frac{t}{C(d)}\right)^{-1},\label{contr2}
\ee
and if $d\ge 3$ 
$$W(0)=\frac{1}{2T}\int_0^\infty \frac{A(0)r^{d+1}}{r^2+2(d-2)T}{\rm e}^{-r^2/(4T)}(4\pi T)^{-\frac{d}{2}}\dr 
=\frac{A(0)}{T}\frac{\Gamma\left(\frac{d}{2}\right)}{8\pi^{\frac{d}{2}}}C(d) \ge \frac{C(d)}{T},$$
then the solution blows up not later than $T$.  
This holds exactly when $A(0)\ge 4\s_d$ since \rf{pole}. 
This solution (cf. \cite[(33)]{BCKSV}) satisfies identity \rf{contr2} with $W(0)=\frac{C(d)}{T}$, and it is, in a~sense, a kind of the minimal smooth blowing up solution.  
So, we have 
 
\begin{corollary}
Moreover, if $A(t)\equiv 4\s_d$, $d\ge 3$,  we have an explicit example of blowing up solution with infinite mass 
\be
M(r,t)=\frac{4\s_d r^d}{r^2+2(d-2)(T-t)} 
\label{bl-sol}
\ee
whose density approaches $\frac{4(d-2)}{|x|^2}=2u_C(x)$, i.e. twice  the singular stationary solution, when $t\nearrow T$ so that the density of this solution  becomes infinite at the origin for $t=T$. 

Clearly, for this solution $u$ and the corresponding initial density $u_0$ we have for each $t\in[0,T)$ 
$$
u_0(x)=4(d-2)\frac{r^2+2T}{(r^2+2(d-2)T)^2} ,\ \ \ |\!\!| u_0|\!\!|_{M^{d/2}}=4\s_d=\lim_{r\to\infty}r^{2-d}M(r,t)=|\!\!|u(t)\mn2. 
$$
\end{corollary}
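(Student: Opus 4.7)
The plan is to establish the three parts of the corollary sequentially: (i) recover the density $u(x,t)$ from the distribution function $M(r,t)$ (and thereby read off $u_0$); (ii) verify that $(u,v)$ with $v=E_d\ast u$ solves system \rf{equ}--\rf{eqv}; (iii) pass to the limit $t\nearrow T$ and compute the Morrey norm.

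For (i), the radial symmetry gives $u(x,t)=M_r(r,t)/(\s_d r^{d-1})$ with $r=|x|$. Differentiating the quotient $4\s_d r^d/(r^2+2(d-2)(T-t))$ yields a density of the form $u(x,t)=4(d-2)(r^2+c_d(T-t))/(r^2+2(d-2)(T-t))^2$ for an explicit constant $c_d$; evaluating at $t=0$ gives the stated expression for $u_0$.

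For (ii), I would recast the problem as a single scalar equation for the distribution function. Integrating \rf{equ} over $\{|y|<r\}$ and applying the divergence theorem, combined with the identity $\p_r v=-M/(\s_d r^{d-1})$ from Lemma \ref{potential}, yields
$$
M_t=M_{rr}-\frac{d-1}{r}M_r+\frac{MM_r}{\s_d r^{d-1}}.
$$
Substituting the explicit candidate into this equation and clearing the common denominator $(r^2+2(d-2)(T-t))^3$ reduces the verification to a polynomial identity in $r$ and $T-t$ that follows by direct expansion. This algebraic bookkeeping is the main (purely computational) obstacle.

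Finally, for (iii), sending $t\nearrow T$ in the explicit formula gives $M(r,T)=4\s_d r^{d-2}$ and hence $u(x,T)=4(d-2)|x|^{-2}=2u_C(x)$, singular at the origin as claimed. A sign check on the derivative of the rational function shows $u(\cdot,t)$ is radially strictly decreasing in $r$, so by rearrangement the supremum in $|\!\!|\,\cdot\,|\!\!|_{M^{d/2}}$ is attained on balls centered at the origin, and
$$
|\!\!|u(t)|\!\!|_{M^{d/2}}=\sup_{R>0}R^{2-d}M(R,t)=\sup_{R>0}\frac{4\s_d R^2}{R^2+2(d-2)(T-t)}=4\s_d,
$$
the supremum being approached as $R\to\infty$ and independent of $t\in[0,T)$; the case $t=0$ yields the value for $u_0$.
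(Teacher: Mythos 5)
Your proposal is correct in substance, and it takes a genuinely more self-contained route than the paper. The paper does not verify directly that \rf{bl-sol} defines a solution: it obtains the profile $M(r,t)=A(t)r^d/(r^2+2(d-2)(T-t))$ as the equality case of the Cauchy inequality \rf{Cauchy} and then appeals to \cite[(33)]{BCKSV} for the fact that the choice $A(t)\equiv 4\s_d$ gives an exact solution. Your step (ii) --- substituting into the mass equation \rf{mass} --- supplies exactly the verification the paper outsources, and it is needed: saturating the Cauchy inequality at each fixed $t$ does not by itself show that the family is invariant under the flow. I checked the algebra: with $a=2(d-2)(T-t)$ one gets $M_{rr}-\frac{d-1}{r}M_r+\frac{MM_r}{\s_d r^{d-1}}=\frac{8(d-2)\s_d r^d}{(r^2+a)^2}=M_t$, so the polynomial identity does close. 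Your steps (i) and (iii) are also sound; in particular the rearrangement argument reducing the $M^{d/2}$ supremum to centered balls is correct since $u(\cdot,t)$ is radially decreasing.

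One point needs fixing. Carrying out your step (i) gives
$$
u(x,t)=4\,\frac{(d-2)r^2+d\,a}{(r^2+a)^2}=4(d-2)\,\frac{r^2+2d(T-t)}{\bigl(r^2+2(d-2)(T-t)\bigr)^2},
$$
i.e.\ your constant is $c_d=2d$, so at $t=0$ the numerator is $r^2+2dT$, \emph{not} $r^2+2T$ as printed in the statement. Your assertion that evaluation at $t=0$ ``gives the stated expression'' is therefore not literally true; the printed formula for $u_0$ appears to be a typo (one can confirm $c_d=2d$ by checking that $\s_d\int_0^R u_0(r)r^{d-1}\dr$ reproduces $M(R,0)$, which fails with $c_d=2$). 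State the corrected formula explicitly rather than matching it silently to the one in the corollary.
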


We express below a sufficient condition \rf{Wc} for blowup in terms of the radial concentration. 

\begin{proposition}[Comparison of blowing up solutions] \label{thr-2} 
\par\noindent 
Let $d\ge 3$ and define the threshold number 
\bea
{\mathcal N}&=&\inf\left\{N: {\rm  solution\ with\ the\ initial\ datum\ satisfying\ } M(r)=N\un_{[1,\infty)}(r) \right.\nonumber\\  &\quad& \left. {\rm blows\ up\ in\ a\ finite\ time}\right\}.\nonumber
\eea
Then  the asymptotic relation  
$${\mathcal N}\lessapprox 4\s_d\sqrt{\pi(d-2)}
$$
holds as  $d\to\infty$. 
Therefore, if $u_0\ge 0$ is such that $\xn u_0\xn > {\mathcal N}$, then the solution with $u_0$ as initial datum blows up in a finite time.
\end{proposition}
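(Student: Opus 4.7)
The plan is to apply the Fujita-type sufficient condition \rf{Wc} of Theorem~\ref{blth} both to the concentrated datum defining $\mathcal N$ and, via a suitable lower bound, to an arbitrary radial $u_0\ge 0$, and then to estimate the resulting threshold asymptotically in $d$.

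\emph{Step 1: the Fujita threshold for the concentrated datum.} For $u_0^{(N)}$ with distribution function $M(r)=N\un_{[1,\infty)}(r)$, i.e.\ the uniform surface measure on $\{|x|=1\}$ of total mass $N$, one computes
\[ e^{T\Delta}u_0^{(N)}(0)=N(4\pi T)^{-d/2}e^{-1/(4T)}, \]
so $T\,e^{T\Delta}u_0^{(N)}(0)=N(4\pi)^{-d/2}T^{1-d/2}e^{-1/(4T)}$. Maximising in $T$ yields $T^\ast=1/(2(d-2))$ and the maximum value $N\cdot K(d)$, with
\[ K(d):=(4\pi)^{-d/2}\bigl(2(d-2)\bigr)^{d/2-1}e^{-(d-2)/2}. \]
Applying \rf{Wc} therefore gives $\mathcal N\le C(d)/K(d)$.

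\emph{Step 2: asymptotics as $d\to\infty$.} The limit $C(d)\to 1$ follows from Laplace's method applied to \rf{Cd}: the integrand $\rho^{d+1}e^{-\rho^2}$ peaks at $\rho\sim\sqrt{d/2}$, where $2(d-2)+4\rho^2\sim 4d$; the substitution $u=\rho^2$ together with $\Gamma(d/2+1)=(d/2)\Gamma(d/2)$ then produces the value $1$. Using Stirling's formula $\Gamma(d/2)\sim\sqrt{4\pi/d}\,(d/2)^{d/2}e^{-d/2}$ in $\s_d=2\pi^{d/2}/\Gamma(d/2)$ together with the elementary $\bigl(d/(d-2)\bigr)^{(d-1)/2}\to e$, a direct computation gives
\[ \frac{C(d)/K(d)}{4\s_d\sqrt{\pi(d-2)}}\longrightarrow \tfrac12, \]
so that $\mathcal N\le C(d)/K(d)\lessapprox 4\s_d\sqrt{\pi(d-2)}$, in fact with a factor of two to spare.

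\emph{Step 3: general radial $u_0$.} For any radial $u_0\ge 0$ with distribution function $M$, radial integration by parts gives
\[ T\,e^{T\Delta}u_0(0)=\frac{(4\pi T)^{-d/2}}{2}\int_0^\infty M(r)\,r\,e^{-r^2/(4T)}\dr. \]
Since $M$ is non-decreasing, for any $R>0$ we have $M(r)\ge M(R)$ on $\{r\ge R\}$, so estimating the integrand from below produces
\[ T\,e^{T\Delta}u_0(0)\ge T(4\pi T)^{-d/2}M(R)\,e^{-R^2/(4T)}, \]
and the same optimisation in $T$ as in Step 1, now with the unit length scale replaced by $R$, gives $\sup_{T>0}T\,e^{T\Delta}u_0(0)\ge R^{2-d}M(R)\,K(d)$. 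Taking the supremum over $R>0$ then produces
\[ \sup_{T>0}T\,e^{T\Delta}u_0(0)\ge \xn u_0\xn\cdot K(d). \]
Hence \rf{Wc} is satisfied --- and the solution blows up --- whenever $\xn u_0\xn>C(d)/K(d)$, which is precisely the Fujita bound underlying the estimate on $\mathcal N$ from Step 1.

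The main obstacle is the asymptotic analysis of Step 2: one must simultaneously control Stirling's expansion of $\Gamma(d/2)$ in $\s_d$, the $(d-2)^{d/2-1}$ factor in $K(d)$, and the $e^{(d-2)/2}$ factor, in order to extract the precise limiting constant $\tfrac12$ and hence the stated $\lessapprox$. Equally delicate is the justification $C(d)\to 1$ (rather than the crude upper bound $C(d)\le 2$ used to define \rf{Cd}), which requires noting that $2(d-2)+4\rho^2$ is asymptotic to $4d$ --- not to $4\rho^2$ --- at the Laplace peak.
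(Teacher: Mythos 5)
Your Steps 1--2 establish the asymptotic bound on $\mathcal N$ by essentially the same route as the paper: test the Fujita criterion of Theorem \ref{blth} on the unit-sphere surface measure, optimise in $T$ (your $K(d)$ coincides exactly with the paper's $L_2(d)$ in \rf{2asymp}), and apply Stirling's formula. The only difference is that the paper uses the crude bound $C(d)\le 2$ (phrased via the doubled Chandrasekhar solution in \rf{Bes-est}), which yields precisely ${\mathcal N}\le 2/L_2(d)\approx 4\s_d\sqrt{\pi(d-2)}$, whereas your Laplace-method refinement $C(d)\to 1$ gives the sharper ${\mathcal N}\lessapprox 2\s_d\sqrt{\pi(d-2)}$; that refinement is correct (the mass of $\r^{d+1}{\rm e}^{-\r^2}$ concentrates where $2(d-2)+4\r^2\approx 4d$), but it is not needed for the inequality as stated.

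The gap is in Step 3, i.e.\ in the final sentence of the proposition. Your inequality $\sup_{T>0}T{\rm e}^{T\Delta}u_0(0)\ge K(d)\,\xn u_0\xn$ is correct, but it only yields blowup when $\xn u_0\xn>C(d)/K(d)$. The claim is blowup when $\xn u_0\xn>{\mathcal N}$, and ${\mathcal N}$ is defined as an infimum over all $N$ for which the concentrated datum blows up for \emph{any} reason, not only those detected by the Fujita criterion; since that criterion is merely sufficient, one may have ${\mathcal N}<C(d)/K(d)$ strictly, in which case your hypothesis is genuinely stronger than the asserted one. The paper closes this by monotonicity: if $\xn u_0\xn>{\mathcal N}$ then $M(r)\ge R^{d-2}N\un_{[R,\infty)}(r)$ for some $R>0$ and $N>{\mathcal N}$, and the averaged comparison principle for the mass equation \rf{mass} (see \cite[Theorem 2.1]{BKP}) transfers blowup from the rescaled surface-measure datum to $u_0$. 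The same comparison principle is also what makes the set of $N$ for which $N\un_{[1,\infty)}$ blows up upward closed, so that every $N>{\mathcal N}$, not just a sequence approaching the infimum, gives blowup. Without such a comparison step the last sentence of the proposition does not follow from the Fujita criterion alone.
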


The inequality  $\xn u_0\xn > {\mathcal N}$ means that the radial distribution function corresponding to such $u_0$ satisfies $M(r)\ge R^{d-2}{\mathcal N}\un_{[R,\infty)}(r)$ for some $R>0$. 
Above,  the radial distribution function $\un_{[1,\infty)}$ corresponds, of course, to the normalized Lebesgue measure $\s_d^{-1}\,{\rm d}S$ on the unit sphere ${\mathbb S}^{d-1}$.

\begin{proof}
Here and in the sequel, due to the scaling properties of system \rf{equ}--\rf{eqv}, we may consider $R=1$ which does not lead to loss of generality.

First note that if $u_0(x)\ge 0$ is such that $N=M(1,0)>{\mathcal N}$ for the corresponding radial distribution function $M$,  then $M(r,0)\ge N\un_{[1,\infty)}(r)$ for all $r>0$ and the solution $u$ with $u_0$ as the initial datum blows up in a finite time. 
  Indeed, this is an immediate consequence of the averaged comparison principle, i.e. \cite[Theorem 2.1]{BKP} or the comparison principle for equation \rf{mass} below, see also \cite{B-BCP} in the case $d=2$
\be
\frac{\partial M}{\partial t}=M_{rr}-\frac{d-1}{r}M_r+\frac{1}{\sigma_d}r^{1-d}MM_r.\label{mass}
\ee
Thus, from equation \rf{bl-sol} we know that if 
\be 
\sup_{t>0}t{\rm e}^{t\Delta}u_0(0)>2=2\sup_{t>0}t{\rm e}^{t\Delta}\left(\frac{2(d-2)}{|x|^2}\right)(0), 
\label{Bes-est}
\ee
then $u$ blows up in a finite time. 
To check that 
\be
K_2(d)\equiv \sup_{t>0}t{\rm e}^{t\Delta}\left(\frac{2(d-2)}{|x|^2}\right)(0) =1,\label{K2}
\ee
 let us compute 
\bea 
t(4\pi t)^{-\frac{d}{2}}\int |x|^{-2}\exp\left(-\frac{|x|^2}{4t}\right)\dx &=& 
\pi^{-\frac{d}{2}}\int\frac14 |z|^{-2}{\rm e}^{-|z|^2}\,{\rm d}z \nonumber\\
&=& \frac14 \pi^{-\frac{d}{2}}\s_d\int_0^\infty {\rm e}^{-\r^2}\r^{d-3}\,{\rm d}\r \nonumber\\ 
&=& \frac{1}{4\Gamma\left(\frac{d}{2}\right)}\int_0^\infty {\rm e}^{-\tau}\tau^{\frac{d}{2}-2}\,{\rm d}\tau \nonumber\\
&=& \frac{1}{4\Gamma\left(\frac{d}{2}\right)}\Gamma\left(\frac{d}{2}-1\right)\nonumber\\
&=&\frac{1}{2(d-2)}. \label{2asy}
\eea 
Note that, by  the above computations, there exist radial initial data $u_0\in L^1(\R^d)\cap L^\infty(\R^d)$ with $W(0)$ as close to $1$ as we wish. 
In other words, we have $C(d)\in [1,2)$. 

To calculate the asymptotics of the number $\mathcal N$ observe that the quantity $\sup_{t>0}t{\rm e}^{t\Delta}u_0(0)$ in \rf{Bes-est}  for the normalized Lebesgue measure $\s_d^{-1}\,{\rm d}S$ on the unit sphere ${\mathbb S}^{d-1}$ is equal to 
\be
L_2(d)\equiv \sup_{t>0}t(4\pi t)^{-\frac{d}{2}}{\rm e}^{-\frac{1}{4t}} =\frac14 \pi^{-\frac{d}{2}} \left(\frac{d-2}{2}\right)^{\frac{d}{2}-1}{\rm e}^{1-\frac{d}{2}}.\label{2asymp} 
\ee
Therefore, by \rf{pole} and the Stirling formula for the Gamma function 
 \be
\Gamma(z+1)\approx \sqrt{2\pi z}\, z^z{\rm e}^{-z}\ \ {\rm as\ \ }z\to \infty,\label{Stir}
\ee
the asymptotic relations  $L_2(d)\approx \frac{1}{2\s_d}\frac{1}{\sqrt{\pi(d-2)}}$  and ${\mathcal N}\lessapprox 4\s_d\sqrt{\pi(d-2)}$ hold. 
\end{proof}
This improves the estimate of ${\mathcal N}\asymp d\s_d$  in \cite[Section 8]{BKZ2}.

We give below some other examples of initial data leading to a finite time blowup of solutions. 

\begin{remark}\label{cxuC}
Observe that for each initial condition $u_0\not\equiv 0$ there is $N>0$ such that condition \rf{Wc} is satisfied for $Nu_0$.

Clearly, by $\xn u_C\xn= |\!\!| u_C\mn2 =2\s_d$ and identities \rf{Bes-est}--\rf{2asy}, for  each $\eta>2$ the solution with the initial condition $u_0=\eta u_C$   blows up. 

Moreover, for each $\eta>2$ and sufficiently large $R=R(\eta)>1$ the bounded initial condition of compact support $u_0=\eta\un_{\{1\le|x|\le R\}}u_C$ leads to a blowing up solution, see \rf{Wc}. 
The singularity of that solution at the blowing up time is $\asymp\frac{1}{|x|^2}$ at the origin. 
It seems that the latter result cannot be obtained applying previously known  sufficient criteria for blowup like \rf{bl-JEE}.  

On the other hand, the initial data like $\min\{1,u_C\}+\eps \psi$ with a smooth nonnegative, compactly supported function $\psi$ and a sufficiently small $\eps>0$ (somewhere they are above the critical $u_C$ pointwisely) still  lead to global-in-time solutions according to \cite[Theorem 2.1]{BKP}. 
\end{remark}

\begin{remark}[Equivalent qualitative conditions for blowup]\label{TFCAE} \ 

 The condition $\ell(u_0)=\sup_{t>0}t{\rm e}^{t\Delta}u_0(0)>2$ is sufficient for blowup, see condition \rf{Bes-est}. 

The quantity $\tilde\ell\ge\ell$ 
\be
\tilde\ell(u_0)\equiv \sup_{t>0}t\left\| {\rm e}^{t\Delta}u_0\right\|_\infty 
\label{Besov}
\ee 
is   a Banach space norm  equivalent to the norm of the Besov space $B^{-2}_{\infty,\infty}(\R^d)$, thus for nonnegative functions $u_0$  the property $\sup_{t>0}t\left\|{\rm e}^{t\Delta}u_0\right\|_\infty\gg 1$  is equivalent to the condition $|\!\!| u_0|\!\!|_{M^{d/2}} \gg 1,$ see e.g. \cite[Prop. 2 B)]{Lem}. 
Note that, however,  the comparison constants for $\ell$, $\tilde\ell$ and $\xn \, . \, \xn$, $|\!\!|\, .\, |\!\!|_{M^{d/2}}$ strongly depend on the dimension $d$, see e.g. \cite[Proposition 7.1,Remark 8.1]{BKZ2}. 
Summarizing, qualitative sufficient conditions for blowup for radial $u_0\ge 0$ 
\begin{itemize}
\item $ \sup_{t>0}t {\rm e}^{t\Delta}u_0(0)\gg 1$,

\item $\sup_{t>0}t\left\| {\rm e}^{t\Delta}u_0\right\|_\infty\gg 1$, 

\item $\xn u_0\xn\equiv \sup_{r>0}r^{2-d}\int_{\{|x|<r\}} u_0(x)\dx \gg 1$,

\item $|\!\!|u_0\mn2 \equiv \sup_{r>0, \,x\in\R^d}r^{2-d}\int_{\{|y-x|<r\}} u_0(y)\dy \gg1$, 
\end{itemize}
are mutually equivalent, however, with comparison constants depending on  $d$. 
\end{remark}

\begin{remark}[Ill-posedness of the Cauchy problem for large data in $M^{d/2}(\R^d)$]\label{discontinuity} 
 Concerning the existence of solutions of the Cauchy problem 
\rf{equ}--\rf{ini}, we note that global-in-time mild solutions exist 
with small initial data $u_0$ in the Morrey space $M^{d/2}(\R^d)$, 
see \cite[Theorem 1 B)]{Lem}.  
Moreover, those with $u_0(x)=\frac{\e}{|x|^2}$, $0<\e\ll 1$, are selfsimilar, see \cite{B-SM}. 

Local-in-time mild solutions are shown to exist for data 
in $M^{d/2}\cap M^p(\R^d)$ with $p\in\left(\frac{d}{2},d\right)$ 
of arbitrary size, see \cite[Proposition 3.1]{BKP}.  
This assumption means that all local singularities of such data are 
strictly weaker than $\frac{1}{|x-x_0|^2}$. 
Indeed, $u_0\in M^p(\R^d)$ for $p>d/2$ implies $\lim_{r\to 0}r^{2-d}\int_{\{|x|<r\}}u_0(x)\dx=0$. 
They enjoy an instantaneous regularization property: $u(t)\in L^\infty(\R^d)$ for each $t>0$. 
More precisely,  $\sup_{0<t\le T}t^{\frac{d}{2p}}\|u(t)\|_\infty<\infty$ 
for such solutions. 
The Chandrasekhar locally unbounded solution $u_C$ is a threshold in the following sense: 
all solutions with initial data strictly below $u_C$ are global and locally bounded, see for precise statement \cite[Theorem 2.1]{BKP}. 
Therefore, the above criteria for blowup apply to solutions with 
data in $M^{d/2}\cap M^p(\R^d)\supset \{f:(1+|x|^2)f(x)\in L^\infty(\R^d)\}$ of sufficiently big size. 
Note that if a radial $u_0\in M^{d/2}(\R^d)$ has a singularity at the origin  strong enough, in the sense that 
\be
\limsup_{R\to 0}R^{2-d} \int_{\{|y|<R\}}u_0(y)\dy >\tilde{C}(d)>C(d)>2\s_d=R^{2-d} \int_{\{|y|<R\}}u_C(y)\dy\label{str-sing}
\ee
 for some large  $\tilde{C}(d)$,  then the existence time for suitable truncations of $u_0$: $\un_{\{|y|>R_n\}}u_0$, $R_n\to 0$,  tends to $0$. Therefore, a phenomenon of discontinuity of solutions with respect to the initial data occurs in $L^\infty$. 
There is no local mild solution emanating from $u_0$ that enjoy instantaneous $L^\infty$ regularization effect.   
To see this, recall from \cite{BKZ2} an estimate for the existence time of solutions of \rf{equ}--\rf{ini}. 
 In the  case of small $R>0$ in \cite[Theorem 2.9]{BKZ2} inequality \cite[(8.13)]{BKZ2} reads 
 $w_R(t)\ge CR^{d-2}\exp\left(\e R^{-2} t\right)$ for some $C>0$ independent of $R$, since under the condition 
$\limsup_{R\to 0}R^{2-d}\int_{\{|y|<R\}}u_0(y)\dy>C_d$
we have $w_R(0)\ge CR^{d-2}$. 
 Thus, $w_R(T)>M$ and blowup occurs for $T\asymp R^2$  when $R\to 0$. 
 In fact, if $u_0$ satisfies relation \rf{str-sing} then we see that any mild (hence weak) solution of system \rf{equ}--\rf{eqv} does not regularize to $L^\infty$. 
Indeed, suppose {\em a contrario} that a solution $u$ with $u_0$ as the initial data \rf{ini} is in $L^\infty$ for $t\in[t_1,t_2]$ with some $0\le t_1<t_2$. 
Assuming $t_1$ has been chosen sufficiently small, by weak continuity $u_1=u(\cdot ,t_1)$ satisfies the blowup condition \rf{str-sing} on a  ball of  fixed small radius $R>0$. 
Thus, this solution blows up before $T\asymp R^2$, so that $u$ itself blows up before $t_1+T$. 
Since we can choose sufficiently small $R>0$, there exists arbitrarily small $t_\ast>0$ such that $u$ is not in $L^\infty$ for $0<t<t_\ast$.   

Further results on the existence of global small solutions, the well-posedness (and also ill-posedness) of system \rf{equ}--\rf{ini} in Besov type spaces can be found, e.g.,  in \cite{I}. 
\end{remark}

Note that, 
there is no nonnegative  initial condition $u_0$ with   the Morrey space norm $|\!\!| u(t)|\!\!|_{M^{d/2}}$ blowing up.  
Indeed, one can prove that each nonnegative local-in-time solution of system \rf{equ-a}--\rf{eqv-a}   satisfies the condition  $\limsup_{r\to 0,\, x\in\R^d}r^{2-d}\int_{\{|y-x|<r\}}|u(y,t)|\le J(d)<\infty$  for all $t\in(0,T)$ and a universal constant $J(d)$, cf. \cite{B-book}. 
The analogue of this condition for $d=2$ has a clear meaning: the atoms of admissible nonnegative initial data $u_0$, $u_0=\lim_{t\searrow 0}u(t)$ in the sense of weak convergence of measures, are strictly smaller than $8\pi$, see \cite{BZ}.
\medskip

Our results for radially symmetric solutions in \cite{BKP} and the present paper can be summarized in the dichotomy 

\begin{corollary}\label{dich}

(i) If $u_0$ is such that $\xn u_0\xn<2\s_d$ 
then the solution of problem \rf{equ}--\rf{ini} is global-in-time; 

(ii) if $u_0$ is such that $T{\rm e}^{T\Delta}u_0(0)>2$ (so that by Proposition \ref{thr-2}  condition $2\sqrt{\pi d}2\s_d\lessapprox\xn u_0\xn$  asymptotically guarantees that), then the solution of problem \rf{equ}--\rf{ini} blows up not later than at $t=T$. 
\end{corollary}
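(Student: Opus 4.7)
The plan is to obtain both parts as direct consequences of results already established in the paper.

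For (i), I would compare with the Chandrasekhar stationary singular solution $u_C$. As recorded in the discussion after \rf{pole}, the radial distribution function of $u_C$ is $M_C(r) = 2\s_d r^{d-2}$, so $\xn u_C \xn = 2\s_d$. The hypothesis $\xn u_0 \xn < 2\s_d$ then yields, for every $r>0$, the strict pointwise bound
\[
M_0(r) \leq r^{d-2}\xn u_0 \xn < 2\s_d r^{d-2} = M_C(r),
\]
where $M_0$ is the radial distribution function of $u_0$. I would propagate this in time using the averaged comparison principle of \cite[Theorem 2.1]{BKP}, equivalently the parabolic comparison principle applied to equation \rf{mass}, obtaining $M(r,t) \leq M_C(r)$ as long as the solution exists. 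The same reference then furnishes the continuation criterion turning this a priori bound into global-in-time existence.

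For (ii), the hypothesis $T{\rm e}^{T\Delta}u_0(0) > 2$ is stronger than the blowup criterion \rf{Wc} of Theorem \ref{blth}, since $C(d) \leq 2$ for every $d \geq 2$ (as shown in the proof of Theorem \ref{blth}, with equality only at $d=2$). Theorem \ref{blth} thus applies directly and forces $\lim_{t\nearrow T}\|u(t)\|_\infty = \infty$. For the asymptotic sufficient condition $2\sqrt{\pi d}\,(2\s_d) \lessapprox \xn u_0 \xn$, I would appeal to Proposition \ref{thr-2}: the threshold number there satisfies ${\mathcal N} \lessapprox 4\s_d\sqrt{\pi(d-2)} \approx 4\s_d\sqrt{\pi d}$ as $d\to\infty$, so any $u_0$ with $\xn u_0 \xn$ exceeding the right-hand side asymptotically has $\xn u_0 \xn > {\mathcal N}$, hence leads to blowup via the calculation of $L_2(d)$ in the proof of Proposition \ref{thr-2} after optimising the choice of $T$.

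Since each part is essentially a translation of an earlier result, there is no serious technical obstacle. The only point requiring care is that in (i) the bound $M(r,t) \leq M_C(r)$ must actually imply global existence rather than mere boundedness of the radial mass function on the lifespan; this is the content of the threshold/continuation statement in \cite[Theorem 2.1]{BKP}, which says precisely that no radial initial datum dominated in distribution by $u_C$ can blow up.
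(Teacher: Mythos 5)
Your proposal is correct and follows essentially the route the paper intends: the paper offers no separate proof of this corollary, presenting it explicitly as a summary of the threshold/comparison result \cite[Theorem 2.1]{BKP} (which gives (i), since $\xn u_0\xn<2\s_d$ means the radial distribution function is uniformly a factor $\eps<1$ below that of $u_C$) together with Theorem \ref{blth} (using $C(d)\le 2$) and Proposition \ref{thr-2} for (ii). Your assembly of these ingredients, including the care taken that the averaged domination by $u_C$ yields global existence and not merely an a priori bound, matches the paper's reasoning.
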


\section{Blowup of solutions of system with fractional diffusion} 
Here, we generalize results for the Brownian diffusion case $\a=2$ to the case of  the system of nonlocal diffusion-transport equations \rf{equ-a}--\rf{ini-a}  generalizing the classical Keller-Segel system of chemotaxis to the case of  the diffusion process  given by the  fractional power of the Laplacian $(-\Delta)^{\alpha/2}$ with  $\alpha\in(0,2)$, a nonlocal operator,  as was in \cite{BCKZ,BKZ2}. 
 
System \rf{equ-a}--\rf{eqv-a} has a singular stationary solution analogous to the case of Chandrasekhar solution  for $\alpha=2$ in dimensions $d\ge 3$, cf. \cite[Th. 2.1]{BKZ}. 
\begin{proposition}[Singular stationary solutions]\label{Chandra}
Let $d\geq 2$, $2\alpha< d$, and 
\be
s(\alpha,d)=2^\alpha\frac{\Gamma\left(\frac{d-\alpha}{2}+1\right)\Gamma(\alpha)}{\Gamma\left(\frac{d}{2}-\alpha+1\right)\Gamma\left(\frac{\alpha}{2}\right)}
\approx2^{\frac{\alpha}{2}}\frac{\Gamma(\alpha)}{\Gamma\left(\frac{\alpha}{2}\right)}\s_dd^{\frac{\alpha}{2}-1},\ \ d\to\infty. \label{stala-s}
\ee 
Then $u_C(x)=\frac{s(\alpha,d)}{|x|^{\alpha}}$ is a~distributional, radial, stationary solution to system \rf{equ-a}--\rf{eqv-a}.
\end{proposition}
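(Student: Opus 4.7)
The plan is to insert the ansatz $u_C(x)=s|x|^{-\alpha}$ into the stationary version of \rf{equ-a}, namely $(-\Delta)^{\alpha/2}u_C+\nabla\cdot(u_C\nabla v)=0$ paired with $\Delta v=-u_C$, observe that every term produced is a scalar multiple of the single homogeneous distribution $|x|^{-2\alpha}$, and then match coefficients to solve for $s$.

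First I would handle the drift term. The Poisson equation can be solved explicitly by noting, via the radial Laplacian $f''+(d-1)r^{-1}f'$, that $\Delta|x|^{2-\alpha}=(2-\alpha)(d-\alpha)|x|^{-\alpha}$, which yields $v(x)=-\frac{s}{(2-\alpha)(d-\alpha)}|x|^{2-\alpha}$ and $\nabla v(x)=-\frac{s}{d-\alpha}|x|^{-\alpha}x$. A one-line application of $\nabla\cdot(|x|^{-2\alpha}x)=(d-2\alpha)|x|^{-2\alpha}$ then gives
\[
\nabla\cdot(u_C\nabla v)=-\frac{s^2(d-2\alpha)}{d-\alpha}|x|^{-2\alpha}.
\]

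For the diffusion term I would invoke the classical Riesz identity (obtained by pairing the tempered-distribution Fourier transform $\widehat{|x|^{-\beta}}=c(d,\beta)|\xi|^{\beta-d}$ with the multiplier $|\xi|^\alpha$),
\[
(-\Delta)^{\alpha/2}|x|^{-\beta}=2^\alpha\frac{\Gamma((d-\beta)/2)\,\Gamma((\beta+\alpha)/2)}{\Gamma((d-\beta-\alpha)/2)\,\Gamma(\beta/2)}|x|^{-\beta-\alpha},
\]
valid whenever $0<\beta<d-\alpha$, and specialize to $\beta=\alpha$; the hypothesis $2\alpha<d$ is precisely what keeps all Gamma arguments positive and both sides locally integrable. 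Matching coefficients in the stationary equation then yields
\[
s=2^\alpha\frac{\Gamma((d-\alpha)/2)\,\Gamma(\alpha)\,(d-\alpha)}{\Gamma((d-2\alpha)/2)\,\Gamma(\alpha/2)\,(d-2\alpha)},
\]
and two applications of $\Gamma(z+1)=z\Gamma(z)$ absorb the factors $(d-\alpha)$ and $(d-2\alpha)$ into the Gamma functions to produce the closed form \rf{stala-s}. The asymptotic as $d\to\infty$ is then a direct consequence of Stirling's formula \rf{Stir}: $\Gamma(d/2+1-\alpha/2)/\Gamma(d/2+1-\alpha)\sim(d/2)^{\alpha/2}$, which combined with the Stirling expansion of $\sigma_d=2\pi^{d/2}/\Gamma(d/2)$ via \rf{pole} yields the stated form.

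The only real subtlety I would have to address is the distributional interpretation at the origin. Because $2\alpha<d$, each side of the identity is a locally integrable homogeneous function of degree strictly greater than $-d$, so the pointwise identity away from $0$ extends without any hidden delta contributions; equivalently, one can carry out the entire verification on the Fourier side, where the stationary equation becomes an algebraic identity in $|\xi|$. The main difficulty is therefore pure bookkeeping of the Gamma-function factors rather than any genuine analytical obstacle.
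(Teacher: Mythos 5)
Your verification of the main claim is correct, and it is essentially the canonical route: the paper itself offers no proof of Proposition \ref{Chandra}, only a pointer to \cite[Th.~2.1]{BKZ}, and the argument there is the one you give. Concretely: $\nabla v(x)=-\frac{s}{d-\alpha}|x|^{-\alpha}x$ (which you get from an explicit radial antiderivative, and which also follows from Lemma \ref{potential} applied to $M(r)=\frac{s\sigma_d}{d-\alpha}r^{d-\alpha}$), so $\nabla\cdot(u_C\nabla v)=-\frac{s^2(d-2\alpha)}{d-\alpha}|x|^{-2\alpha}$; the Riesz composition identity with $\beta=\alpha$ gives $(-\Delta)^{\alpha/2}u_C=s\,2^\alpha\frac{\Gamma(\frac{d-\alpha}{2})\Gamma(\alpha)}{\Gamma(\frac{d-2\alpha}{2})\Gamma(\frac{\alpha}{2})}|x|^{-2\alpha}$; matching coefficients and using $\Gamma(z+1)=z\Gamma(z)$ twice yields exactly \rf{stala-s}. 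Your treatment of the distributional issue is also right: under $2\alpha<d$ every term is a locally integrable homogeneous function of degree $>-d$, the flux of $u_C\nabla v$ through $\{|x|=\eps\}$ is $O(\eps^{d-2\alpha})\to0$, so no Dirac masses appear. A reassuring sanity check you could add: at $\alpha=2$ the formula collapses to $s(2,d)=4\Gamma(\frac d2)/\Gamma(\frac d2-1)=2(d-2)$, i.e.\ the Chandrasekhar constant \rf{Ch}.

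The one step that does not go through as written is the asymptotics. Your (correct) computation $\Gamma(\frac d2+1-\frac{\alpha}{2})/\Gamma(\frac d2+1-\alpha)\approx(d/2)^{\alpha/2}$ gives $s(\alpha,d)\approx 2^{\alpha/2}\frac{\Gamma(\alpha)}{\Gamma(\alpha/2)}\,d^{\alpha/2}$, with no factor of $\sigma_d$, and since $\sigma_d d^{\alpha/2-1}\big/ d^{\alpha/2}=\sigma_d/d\to 0$, no "combination with the Stirling expansion of $\sigma_d$" can produce the right-hand side of \rf{stala-s}: the two sides of that "$\approx$" are not asymptotically equivalent (at $\alpha=2$ they read $2(d-2)$ versus $2\sigma_d$). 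What the displayed expression $2^{\alpha/2}\frac{\Gamma(\alpha)}{\Gamma(\alpha/2)}\sigma_d d^{\alpha/2-1}$ actually is, is the large-$d$ asymptotics of $\xn u_C\xn_{\frac{d}{\alpha}}=\frac{\sigma_d}{d-\alpha}\,s(\alpha,d)=|\!\!| u_C\mnp$, consistent with the claim $|\!\!| u_C\mnp\asymp\sigma_d d^{\frac{\alpha}{2}-1}$ made after Corollary \ref{dichot}; the statement of \rf{stala-s} conflates the constant $s(\alpha,d)$ with the Morrey norm of $u_C$. So keep your derivation of the closed form, but state the asymptotics as $s(\alpha,d)\approx 2^{\alpha/2}\frac{\Gamma(\alpha)}{\Gamma(\alpha/2)}d^{\alpha/2}$ and attribute the extra factor $\sigma_d/d$ to the passage from $s$ to $\xn u_C\xn_{\frac{d}{\alpha}}$, rather than asserting that your computation "yields the stated form."
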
   
This discontinuous solution $u_C\in M^{d/\a}(\R^d)$ is, in a sense, a critical one which is not smoothed out by the diffusion operator in system \rf{equ-a}--\rf{eqv-a}. 

As usual for nonlinear evolution equations of parabolic type, blowup of a solution $u$ at $t=T$ means (as in Section 2): 
$\limsup_{t\nearrow T,\, x\in\R^d}u(x,t)=\infty$. 
In fact, some $L^p$ norms (with $p>\frac{d}{\alpha}$) of $u(t)$ blow up together with the $L^\infty$-norm. 

\begin{theorem}[Blowup of solutions]\label{bl-a}
If $d\ge 2$, $\a\in(0,2)$, $T{\rm e}^{-T(-\Delta)^{\alpha/2}}u_0 (0)> C_\alpha(d)$ for a constant $C_\alpha(d)>0$ defined below in \rf{C-d-a}, then each solution of problem \rf{equ-a}--\rf{ini-a} blows up in $L^\infty$  not later than $t=T$. 
\end{theorem}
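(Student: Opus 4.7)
The plan is to extend the Fujita-style moment method of Theorem \ref{blth}, now taking as weight the fractional heat kernel. Fix $T>0$ and set $G(x,t)=p_\alpha(x,T-t)$, where $p_\alpha(\cdot,s)$ is the symmetric $\alpha$-stable density, characterised by $\widehat{p_\alpha}(\xi,s)={\rm e}^{-s|\xi|^\alpha}$. Then $G$ is radial, smooth and strictly positive on $[0,T)$, radially decreasing, self-similar with profile $G(x,t)=(T-t)^{-d/\alpha}g(|x|(T-t)^{-1/\alpha})$, and solves the backward problem $G_t-(-\Delta)^{\alpha/2}G=0$, $G(\cdot,T)=\delta_0$. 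Define $W(t)=\int G(x,t)u(x,t)\dx$, so that $W(0)={\rm e}^{-T(-\Delta)^{\alpha/2}}u_0(0)$. Using equation \rf{equ-a} together with the self-adjointness of $(-\Delta)^{\alpha/2}$, the diffusion contributions in $\frac{{\rm d}W}{\dt}$ cancel, leaving $\frac{{\rm d}W}{\dt}=\int u\,\nabla v\cdot\nabla G\dx$. The radial symmetry of $u$ and Lemma \ref{potential} rewrite this as $-\int_0^\infty u(r,t)M(r,t)G_r(r,t)\dr$, with $M$ the radial distribution function of $u$; substituting $u=M_r/(\s_d r^{d-1})$ and integrating by parts in $r$ yields
\be
\frac{{\rm d}W}{\dt}=\frac{1}{2\s_d}\int_0^\infty M^2(r,t)\,A(r,t)\dr,\qquad A(r,t):=\bigl(r^{1-d}G_r(r,t)\bigr)_r. \nonumber
\ee

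The same integration by parts applied to $W=\s_d\int_0^\infty u(r,t)G(r,t)r^{d-1}\dr$ gives $W(t)=-\int_0^\infty M(r,t)G_r(r,t)\dr$. The Cauchy--Schwarz inequality with weight $A$ then produces
\be
W(t)^2\le\int_0^\infty M^2 A\dr\cdot\int_0^\infty \frac{(G_r)^2}{A}\dr=2\s_d\,\frac{{\rm d}W}{\dt}\cdot J(t), \nonumber
\ee
where $J(t)=\int_0^\infty (G_r(r,t))^2/A(r,t)\dr$. By the self-similarity of $G$, the substitution $\rho=r(T-t)^{-1/\alpha}$ makes the scale factors in $G_r$ and $A$ cancel against $\dr$, so $J$ is independent of $t$; we set
\be
C_\alpha(d)=2\s_d\int_0^\infty\frac{(g'(\rho))^2}{\bigl(\rho^{1-d}g'(\rho)\bigr)_\rho'}\,{\rm d}\rho. \nonumber
\ee
This gives the differential inequality $\frac{{\rm d}W}{\dt}\ge W^2/C_\alpha(d)$, whose integration forces $W(t)\to\infty$ strictly before $t=T$ whenever $TW(0)=T{\rm e}^{-T(-\Delta)^{\alpha/2}}u_0(0)>C_\alpha(d)$, contradicting any $L^\infty$ bound on $u$ up to time $T$.

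The main obstacle is the pointwise positivity of the weight $A(r,t)=(r^{1-d}G_r)_r$ together with the resulting finiteness of $C_\alpha(d)$: the integral representation of $\frac{{\rm d}W}{\dt}$ is automatically nonnegative, but a non-degenerate Cauchy--Schwarz bound needs $A\ge 0$ on $(0,\infty)$, and convergence of $J$ requires sharp control of $A$ both at $r=0$ (where $G_r$ vanishes) and at $r=\infty$ (where the stable tail dictates $g(\rho)\asymp \rho^{-d-\alpha}$). For $\alpha=2$ the Gaussian form makes this immediate, producing the explicit factor $(d-2)+r^2/(2(T-t))$; for $\alpha\in(0,2)$ the profile $g$ has no closed form, and the sign and asymptotics of $A$ must be extracted from the characterisation of $g$ as the radial fundamental solution of the fractional heat equation, or equivalently from its L\'evy--Khintchine representation $\widehat{g}(\xi)={\rm e}^{-|\xi|^\alpha}$. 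Once these analytic properties of $g$ are in hand, the remainder of the argument reproduces the Brownian template of Theorem \ref{blth} line by line.
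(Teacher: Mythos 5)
Your proposal follows the paper's proof essentially line by line: same backward fractional heat kernel weight, same moment $W$, same cancellation of the diffusion terms by self-adjointness, same radial integration by parts leading to $\frac{{\rm d}W}{\dt}=\frac{1}{2\s_d}\int_0^\infty M^2\,(r^{1-d}G_r)_r\dr$, same Cauchy--Schwarz step, and the same constant $C_\alpha(d)$ as in \rf{C-d-a}.

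The one substantive point you flag as ``the main obstacle'' --- positivity of the weight $A=(r^{1-d}G_r)_r$ and finiteness of $C_\alpha(d)$ --- is exactly the step that separates the fractional case from the Gaussian one, and you leave it unresolved rather than proving it. The paper settles it with the Bochner subordination formula \rf{Bochner}: since the profile $R$ is a nonnegative mixture of Gaussians, one gets pointwise $R'(\rho)<0$ and $\rho R''(\rho)-R'(\rho)\ge 0$ (see \rf{rR}), hence $(\rho^{1-d}R')'=\rho^{-d}\bigl(\rho R''-(d-1)R'\bigr)\ge 0$ for $d\ge 2$, so the denominator in \rf{C-d-a} is strictly positive; moreover $d-1+\rho R''/|R'|\ge d-2$ together with the normalization \rf{normalizacja} yields $C_\alpha(d)\le \tfrac{2d}{d-2}<\infty$ for $d\ge 3$ (the tail asymptotics \rf{ker-as} control convergence at $\rho=\infty$). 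Since you only point to ``the L\'evy--Khintchine representation'' without extracting these inequalities, your argument is a correct skeleton with the decisive analytic lemma left open; filling it in via subordination reproduces the paper's proof exactly.
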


The proof of Theorem \ref{bl-a} below does not apply to the case $d=1$ which is studied by completely different methods in \cite{BC}.

Informally speaking, this sufficient condition for blowup is equivalent to $|\!\!| u_0\mnp\gg 1.$
Indeed, for radially symmetric nonnegative functions the condition  $\sup_{T>0}T{\rm e}^{-T(-\Delta)^{\alpha/2}}u_0 (0)\gg 1$
  is equivalent  to the relation $\sup_{T>0}\left\|T{\rm e}^{-T(-\Delta)^{\alpha/2}}u_0\right\|_\infty\gg 1$. 
This fact can be proved using fine estimates of the kernel of the  semigroup ${\rm e}^{-t(-\Delta)^{\a/2}}$ restricted to radial functions, as was   in the case $\a=2$.  
Further, the quantity  $\sup_{T>0}\left\|T{\rm e}^{-T(-\Delta)^{\alpha/2}}u_0\right\|_\infty$ is equivalent to the Morrey space norm $|\!\!| u_0\mnp$.  
Moreover, we note  a useful characterization of  the homogeneous Besov spaces 
$$
\sup_{T>0}T\left\|{\rm e}^{-T(-\Delta)^{\alpha/2}}u\right\|_\infty<\infty\ \ \ {\rm if\ and\ only\ if}\ \ \ u\in B^{-\a}_{\infty,\infty}(\R^d),
$$ 
shown  in \cite[Proposition 2B)]{Lem} for $\a=2$, 
and for $\a\in(0,2)$ in \cite[Sec. 4, proof of Prop. 2]{Lem2}.  
\bigskip

Before proving Theorem \ref{bl-a} we recall some analytic properties of the fractional Laplacians and the semigroups on $\R^d$ generated by them. 
The semigroup ${\rm e}^{-t(-\Delta)^{\alpha/2}}$ with $\alpha\in(0,2)$ is represented with the use of the Bochner subordination formula, cf. \cite[Ch. IX.11]{Y}
\be
{\rm e}^{-t(-\Delta)^{\alpha/2}}=\int_0^\infty f_{t,\alpha}(\lambda){\rm e}^{\lambda\Delta}\dl\label{subord}
\ee
with some functions $f_{t,\alpha}(\lambda)\ge 0$ {\em independent } of $d$. In fact, the subordinators $f_{t,\alpha}$ satisfy 
$$
{\rm e}^{-ta^\alpha}=\int_0^\infty f_{t,\alpha}(\lambda){\rm e}^{-\lambda a}\dl,
$$
so that they have selfsimilar form $f_{t,\alpha}(\lambda)=t^{-\frac{1}{\alpha}}f_{1,\alpha}\left(\lambda t^{-\frac{1}{\alpha}}\right)$. 
Therefore, the kernel $P_{t,\alpha}$  of ${\rm e}^{-t(-\Delta)^{\alpha/2}}$ is also of selfsimilar radial form, and can be expressed as 
\be
P_{t,\alpha}(x)=\int_0^\infty f_{t,\alpha}(\lambda)(4\pi \lambda)^{-\frac{d}{2}}{\rm e}^{-|x|^2/4\lambda}\dl = t^{-\frac{d}{\alpha}}R\left(\frac{|x|}{t^{\frac{1}{\alpha}}}\right) \label{Bochner}
\ee 
with a positive functon $R$ decaying algebraically, together with its derivatives $R'$, $R''$, $\dots$,  
$'=\frac{\partial}{\partial\r}$: 
\be
R(\r)\asymp\r^{-d-\alpha},\ \ R'(\r)\asymp \r^{-d-1-\alpha},\ \ R''(\r)\asymp r^{-d-2-\alpha}, \ \ \dots\ \ {\rm as}   \ \ \r\to\infty.  
\label{ker-as}
\ee
Here $r=|x|$ and $\r=\frac{r}{|T-t|^\frac{1}{\alpha}}$. 
In fact, $R$ satisfies $R(|x|)={\mathcal F}^{-1}\left(\exp\left(-|\xi|^\alpha\right)\right)(x)$.  
 This is  normalized so that  
\be 
 \s_d\int_0^\infty R(\r)\r^{d-1}\,{\rm d}\r=\frac{\s_d}{d}\int_0^\infty |R'(\r)|\r^{d}\,{\rm d}\r=1,\label{normalizacja}
 \ee
 with 
 \bea
R(0)&=&(2\pi)^{-d}\int\exp\left(-|\xi|^\alpha\right)\,{\rm d}\xi\nonumber\\
&=&(2\pi)^{-d}\alpha^{-1}\s_d\int_0^\infty {\rm e}^{-\tau}\tau^{d/\alpha-1}\,{\rm d}\tau=\frac{2\Gamma\left(\frac{d}{\alpha}\right)}{\alpha(4\pi)^{\frac{d}{2}}\Gamma\left(\frac{d}{2}\right)}. \label{zero}
\eea 
 For $\alpha=2$ we have, of course,  $R(\r)=(4\pi)^{-\frac{d}{2}}\exp\left(-\frac{\r^2}{4}\right)$.

\begin{proof}
As in the original reasoning of Fujita in \cite{Fu} applicable to the case $\a=2$ (in Section 2) and in \cite{Su} for a nonlinear fractional heat equation with power sources and $\a\in(0,2)$, here we consider the moment 
\be
W(t)=\int G(x,t)u(x,t)\dx\label{momentW}
\ee
with  the weight function $G=G(x,t)$ solving the backward linear fractional heat equation on $(0,T)$ 
\bea 
G_t-(-\Delta)^{\alpha/2} G&=&0,\label{heat}\\ 
G(.,T)&=&\delta_0.\label{G-ini}
\eea 
It is clear that $G$ has the selfsimilar radially symmetric form 
\be
G(x,t)=P_{T-t,\alpha}(x)=(T-t)^{-\frac{d}{\alpha}}R\left(\frac{|x|}{(T-t)^{\frac{1}{\alpha}}}\right)\label{ker-a}
\ee 
with  the same function $R$ as above.  

For radially symmetric functions $W$ in \rf{momentW} becomes 
$$
W(t)=-(T-t)^{-\frac{d}{\alpha}}\int_0^\infty M(r,t)R'(\r)(T-t)^{-\frac{1}{\alpha}}\dr
$$ 
with $M(r,t)=\int_{\{|x|<r\}}u(x,t)\dx=\s_d\int_0^ru(\r,t)\r^{d-1}{\rm d}\r$ so that $u(\r,t)=\frac{1}{\s_d}\frac{\partial}{\partial r}M(\r,t)\r^{1-d}$, $\r=|x|$. 
This is valid under a mild integrability condition on $u$: \ \ \ $\int u_0(x)(1+|x|)^{-d-\alpha}\dx<\infty$. 
In fact, if $u$ is a nonnegative solution of system \rf{equ-a}--\rf{eqv-a} on $\R^d\times [0,T)$, then for each $t\in[0,T)$ the condition $\int u(x,t)(1+|x|)^{-d-\alpha}\dx<\infty$ holds. 
Indeed, by the integral representation of the fractional Laplacian in \cite[(1.4)]{BKZ2} 
$$
-(-\Delta)^{\alpha/2}u(x,t)={\mathcal A}\left[\left(\int_{\{1\le|y|\}} +\lim_{\delta\searrow 0}\int_{\{\delta\le |y|\le 1\}}\right)\frac{u(x-y,t)-u(x,t)}{|y|^{d+\alpha}}\dy\right],
$$
for some constant ${\mathcal A}>0$, so that $\int_{\{1\le |y|\}}\frac{u(x-y,t)}{|y|^{d+\alpha}}\dy$ must be finite.

Further, we have by Lemma \ref{potential} that  
$\nabla v(x)\cdot x=-\frac1{\sigma_d}|x|^{2-d}\int_{\{|y|\le |x|\}} u(y)\dy$,  
and therefore by selfadjointness of the operator $(-\Delta)^{\a/2}$ 
\bea
\frac{{\rm d}W}{\dt}
&=&\int Gu_t\dx+G_tu\dx\nonumber\\
&=&\int(-(-\Delta)^{\a/2}u-\nabla\cdot(u\nabla v))G\dx+\int(-\Delta)^{\a/2}Gu\dx\nonumber\\
&=&\int u\nabla v\cdot \nabla G \dx\label{evolW}\\ 
&=&-\frac{1}{\s_d}(T-t)^{-\frac{d+1}{\alpha}} \int_0^\infty \frac{\partial}{\partial r}MMr^{1-d}R'(\r)\dr\nonumber\\
&=&\frac{1}{\s_d}(T-t)^{-\frac{d+1}{\alpha}} \int_0^\infty \frac{M^2}{2} \frac{\partial}{\partial r}(r^{1-d}R'(\r))\dr.\nonumber
\eea 
Using the Cauchy inequality as in Section 2 we estimate 
\bea 
W^2(t)&\le&(T-t)^{-\frac{d+1}{\alpha}}\int_0^\infty \frac{M^2}{2\s_d} \left|\frac{\partial}{\partial r}(r^{1-d}R'(\r))\right|\dr \nonumber\\
&\quad&\times (T-t)^{-\frac{d+1}{\alpha}}\int_0^\infty 2\s_d\frac{|R'(\r)|^2}{\left|\frac{\partial}{\partial r}(r^{1-d}R'(\r))\right|}\dr.\label{Cauchy-alpha}
\eea 
Note that the function $\r^{1-d}R'(\r)$ is strictly decreasing as the product of two strictly decreasing positive functions so that the denominator of the integrand in \rf{C-d-a} is strictly positive. 
Now, with the definition of $C_\a(d)$  
\be
C_\alpha(d)=2\s_d\int_0^\infty \frac{|R'(\r)|^2}{\left|\frac{\partial}{\partial \r}(\r^{1-d}R'(\r))\right|}{\rm d}\r,\label{C-d-a}
\ee
the ordinary differential inequality obtained from \rf{evolW} and \rf{Cauchy-alpha} 
$$ \frac{{\rm d}W}{\dt}\ge \frac{1}{C_\alpha(d)}W^2(t)
$$
leads to the estimate
$$
\frac{1}{W(0)}-\frac{1}{W(T)}\ge \frac{T}{C_\alpha(d)}.
$$
Thus, a sufficient condition for the blowup becomes 
\be
T{\rm e}^{-T(-\Delta)^{\alpha/2}}u_0 (0)>{C_\alpha(d)}.\label{bll}
\ee 
Indeed, if \rf{bll} holds, then 
$$
W(t)\ge \frac{1}{\frac{1}{W(0)}-\frac{t}{C_\a(d)}}\ge \frac{C_\a(d)}{T-t},
$$
and $\lim_{t\nearrow T}W(t)=\infty$, and therefore $\limsup_{t\nearrow T} u(x,t)=\infty$. 
\end{proof}
\medskip

Next, we express condition \rf{bll} in terms of the $\da$-concentration \rf{aconc} of  $u_0$, as was for $\a=2$ in Proposition \ref{thr-2}. 
Again, by scaling properties of system \rf{equ-a}--\rf{eqv-a}, it is sufficient to consider $R=1$.

\begin{proposition}[Comparison of blowing up solutions] \label{comp-a} 
\ 

For $d\ge 3$, $\alpha\in(0,2)$, let the threshold number ${\mathcal N}$  be 
\bea
{\mathcal N}&=&\inf\left\{N: {\rm  solution\ with\ the\ initial\ datum\ satisfying\ } M(r)=N\un_{[1,\infty)}(r) \right.\nonumber\\  &\quad&\left. {\rm blows\ up\ in\ a\ finite\ time}\right\}.\nonumber
\eea
Then  the asymptotic relation  
$${\mathcal N}\lesssim \s_dd^{\frac{\alpha}{2}} \ \ {\rm holds \ as }\ \   d\to\infty.
$$ 
\end{proposition}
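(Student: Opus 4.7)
The strategy parallels the $\alpha=2$ proof of Proposition \ref{thr-2}, with Theorem \ref{bl-a} replacing Theorem \ref{blth} and the fractional heat kernel $P_{T,\alpha}$ in place of the Gaussian. First, by the averaged comparison principle for the radial mass equation governing \rf{equ-a}--\rf{eqv-a} (the fractional analogue of \rf{mass} already exploited in \cite{BKZ2}), the infimum defining $\mathcal N$ is reached by testing the sufficient blowup criterion $T{\rm e}^{-T(-\Delta)^{\alpha/2}}u_0(0)>C_\alpha(d)$ on the extremal datum $u_0=(N/\s_d)\,{\rm d}S_{{\mathbb S}^{d-1}}$, whose radial distribution function is exactly $N\un_{[1,\infty)}(r)$ and whose $\da$-radial concentration $\xn u_0\xn_{\da}$ equals $N$.

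Second, I would compute the left-hand side of the criterion. By rotational symmetry and the selfsimilar form $P_{T,\alpha}(x)=T^{-d/\alpha}R(|x|/T^{1/\alpha})$ of \rf{Bochner},
\[
{\rm e}^{-T(-\Delta)^{\alpha/2}}u_0(0)=N\,P_{T,\alpha}(\ei)=N\,T^{-d/\alpha}R(T^{-1/\alpha}).
\]
Substituting $\r=T^{-1/\alpha}$ and setting $L_\alpha(d):=\sup_{\r>0}\r^{d-\alpha}R(\r)$, Theorem \ref{bl-a} yields blowup as soon as $N\,L_\alpha(d)>C_\alpha(d)$, whence $\mathcal N\le C_\alpha(d)/L_\alpha(d)$; it then remains to compare the two dimensional asymptotics.

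Third, I would extract the asymptotics of $L_\alpha(d)$ from the Bochner subordination \rf{subord}--\rf{Bochner}: plugging in the specific test value $\r^2=2d\lambda_0$ with $\lambda_0>0$ near the peak of the $d$-independent subordinator $f_{1,\alpha}$, the inner Gaussian integrand $(4\pi\lambda)^{-d/2}\exp(-\r^2/(4\lambda))$ is concentrated at $\lambda=\r^2/(2d)=\lambda_0$ with width $\asymp\lambda_0/\sqrt d$. A Laplace-method evaluation of the resulting integral in $\lambda$, followed by Stirling's formula \rf{Stir} applied to $\s_d$, yields
\[
\r^{d-\alpha}R(\r)\gtrsim \frac{c_\alpha\,\lambda_0^{(2-\alpha)/2}f_{1,\alpha}(\lambda_0)}{\s_d\,d^{\alpha/2}},
\]
so that $L_\alpha(d)\gtrsim c'_\alpha(\s_d d^{\alpha/2})^{-1}$. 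In parallel I would verify that $C_\alpha(d)$ defined by \rf{C-d-a} stays bounded (or at worst polynomially bounded) as $d\to\infty$: after writing $|\partial_\r(\r^{1-d}R')|=\r^{-d}\bigl(\r(-R'')+(d-1)|R'|\bigr)$, the kernel asymptotics \rf{ker-as} and the normalization \rf{normalizacja}--\rf{zero} show that the integrand $|R'|^2/|\partial_\r(\r^{1-d}R')|$ decays like $\r^{-\alpha-1}/d$ at infinity, while its contribution near the origin is of the same order, and the prefactor $\s_d$ in front of the integral is compensated by the $\s_d$-factor implicit in \rf{normalizacja}. Combining the two asymptotics produces $\mathcal N\le C_\alpha(d)/L_\alpha(d)\lesssim \s_d d^{\alpha/2}$ as claimed.

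The main obstacle is the Laplace-type evaluation of the Bochner integral in $\lambda$: unlike the $\alpha=2$ case, where the Gaussian kernel allows an elementary direct maximization, for $\alpha\in(0,2)$ the $\alpha$-stable subordinator $f_{1,\alpha}$ is not explicit, and one must control it uniformly around its peak and match that with the narrow Gaussian saddle in $\lambda$. Once the leading $d$-asymptotic of $L_\alpha(d)$ is secured, the estimate of $C_\alpha(d)$ is largely routine, in the spirit of the computation \rf{2asy} already carried out in Proposition \ref{thr-2}.
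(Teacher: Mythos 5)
Your proposal follows the same architecture as the paper's proof: test the blowup criterion of Theorem \ref{bl-a} on the normalized surface measure (whose value at the origin is $T^{1-d/\alpha}R(T^{-1/\alpha})$, i.e. $L_\alpha(d)=\sup_{\r>0}\r^{d-\alpha}R(\r)$ after the substitution $\r=T^{-1/\alpha}$), conclude ${\mathcal N}\le C_\alpha(d)/L_\alpha(d)$, and then prove $L_\alpha(d)\gtrsim (\s_d d^{\alpha/2})^{-1}$ together with $C_\alpha(d)=O(1)$. Your Laplace-method treatment of $L_\alpha(d)$ is in substance identical to the paper's: after the change of variables $\tau=\r^2/(4\lambda)$ the saddle sits at $\tau_0=\tfrac{d-\alpha}{2}-1$, and the ``main obstacle'' you identify (uniform control of $f_{1,\alpha}$ near the saddle) is resolved exactly as you suggest, by restricting to a window $[\tau_0,\tau_0+h]$ with $h\asymp\sqrt d$, over which $\r^2/(4\tau)$ varies only by a factor $1+{\mathcal O}(d^{-1/2})$ so that the subordinator factor is uniformly bounded below; this loses only the factor $h/\sqrt d\asymp 1$ against the full Gamma integral.

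The one genuine gap is your estimate of $C_\alpha(d)$. First, a sign slip: since $R'<0$ one has $\partial_\r\bigl(\r^{1-d}R'\bigr)=\r^{-d}\bigl((d-1)|R'|+\r R''\bigr)$, not $\r^{-d}\bigl((d-1)|R'|-\r R''\bigr)$, and $R''$ changes sign (it is positive for large $\r$), so one must rule out cancellation in this bracket. Second, the tail asymptotics \rf{ker-as} cannot do this job on their own: the implied constants there are $d$-dependent and untracked, the claimed $\r^{-\alpha-1}/d$ behaviour is not even integrable at the origin (near $\r=0$ the integrand is in fact $\asymp\r^{d+1}/d$), and no uniform-in-$\r$ lower bound on the denominator follows from asymptotics at infinity. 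The paper's fix is a pointwise inequality obtained directly from the Bochner representation: $\r R''(\r)-R'(\r)=\int_0^\infty f_{1,\alpha}(\lambda)(4\pi\lambda)^{-d/2}\tfrac{\r^3}{4\lambda^2}{\rm e}^{-\r^2/4\lambda}\dl\ge 0$, whence $(d-1)|R'|+\r R''\ge(d-2)|R'|$ for all $\r>0$, and then the normalization \rf{normalizacja} gives the clean bound
\begin{equation*}
C_\alpha(d)\le \frac{2\s_d}{d-2}\int_0^\infty |R'(\r)|\,\r^{d}\,{\rm d}\r=\frac{2d}{d-2}.
\end{equation*}
You should replace your asymptotic argument by this subordination-based convexity inequality; with that change the proof is complete and coincides with the paper's.
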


\begin{proof}
Let us compute for the kernel  $P_{t,\alpha}$ of the semigroup  ${\rm e}^{-t(-\Delta)^{\alpha/2}}$ the quantity
\bea
K_\alpha(d)&=& \sup_{t>0}tP_{t,\a}\left(\frac{s(\alpha,d)}{|x|^\alpha}\right)(0) \nonumber\\ &=&s(\alpha,d) \sup_{t>0} t^{1-\frac{d}{\alpha}}\s_d\int_0^\infty R\left(\frac{r}{t^{\frac{1}{\alpha}}}\right)r^{-\alpha+d-1}\dr\nonumber\\
&=&s(\alpha,d)\s_d \int_0^\infty R(\r)\r^{d-1-\alpha}\,{\rm d}\r \label{s-alpha}\\
&=&s(\alpha,d) \s_d\int_0^\infty\int_0^\infty f_{1,\alpha}(\lambda)(4\pi)^{-\frac{d}{2}} \lambda^{-\frac{d}{2}}{\rm e}^{-\r^2/4\lambda}\r^{d-\alpha-1}\dl \, {\rm d}\r\nonumber\\
&=&s(\alpha,d) \s_d\pi^{-\frac{d}{2}}\int_0^\infty f_{1,\alpha}(\lambda)\int_0^\infty 2^{-d}{\rm e}^{-\tau}\lambda^{-\frac{d}{2}+\frac{d}{2}-\alpha/2}2^{d-\alpha-1}\tau^{\frac{d-\alpha}{2}-1}\,{\rm d}\tau \nonumber\\
&=&2^\alpha\frac{\Gamma\left(\frac{d-\alpha}{2}+1\right)\Gamma(\alpha)}{\Gamma\left(\frac{d}{2}-\alpha+1\right)\Gamma\left(\frac{\alpha}{2}\right)} \frac{\Gamma\left(\frac{d-\alpha}{2}\right)}{\Gamma\left(\frac{d}{2}\right)} 2^{-\alpha}\int_0^\infty f_{1,\alpha}(\lambda) \lambda^{-\frac{\alpha}{2}}\dl\nonumber\\
&=& k_0(\alpha)\frac{\Gamma\left(\frac{d-\alpha}{2}+1\right)}{\Gamma\left(\frac{d}{2}-\alpha+1\right)}\frac{\Gamma\left(\frac{d-\alpha}{2}\right)}{\Gamma\left(\frac{d}{2}\right)},\nonumber\\
&\approx& k(\alpha)\nonumber
\eea
for some constants $k_0(\alpha),\, k(\alpha)>0$ {\em independent} of $d$, $d\to\infty$, by formulas \rf{stala-s}, \rf{Bochner}. 

\noindent 
By the comparison principle in \cite[Th. 2.4]{BKZ2}, if $0\le u_0\le \eps u_C$ for an $\eps\in[0,1)$, then the solution is global so it does not blow up in finite time, therefore $K_\alpha(d)\le C_\alpha(d)$ by this comparison result. 

So, now we need an upper estimate of the constant $C_\alpha(d)$ defined in \rf{C-d-a}. 
By definition \rf{C-d-a}, the global-in-time existence result \cite[Th. 2.4]{BKZ2}  and relations \rf{normalizacja},  we obtain 
\be
K_\alpha(d)\le C_\alpha(d)\le \frac{2d}{d-2}.\label{as-C-d-a}
\ee 
Indeed, the left hand side inequality is the consequence of the comparison principle in  \cite[Th. 2.4]{BKZ2}. 
Then, by representation \rf{Bochner} we have $R'(\r)<0$ for $\r>0$, and 
\be
0\le \r R''(\r)-R'(\r)=\int_0^\infty f_{1,\alpha}(\lambda)(4\pi\lambda)^{-\frac{d}{2}}\left(\frac{\r^2}{4\lambda^2}-\frac{\r}{2\lambda}+\frac{\r}{2\lambda}\right){\rm e}^{-\r^2/4\lambda}\dl, \label{rR}
\ee
so that 
$$
d-1+\r\frac{R''(\r)}{|R'(\r)|}\ge d-2,$$
and the right hand side inequality in estimate \rf{as-C-d-a} follows.  
\bigskip

Now, we will  test  the normalized Lebesgue measure $\s_d^{-1}\,{\rm d}S$ on the unit sphere ${\mathbb S}^d$ corresponding to  the radial distribution function $\un_{[1,\infty)}(r)$ 
\bea
L_\alpha(d)&\equiv&\sup_{t>0}t{\rm e}^{-t(-\Delta)^{\alpha/2}}\left(\s_d^{-1}\,{\rm d}S\right)\nonumber\\ 
 &=& \sup_{t>0}t ^{1-\frac{d}{\alpha}}R\left(\frac{1}{t^{\frac{1}{\alpha}}}\right) \nonumber\\ 
&=&\sup_{\r>0}\r^{d-\alpha}R(\r)\nonumber\\
&=&\sup_{\r>0}\int_0^\infty f_{1,\alpha}(\lambda)(4\pi\lambda)^{-\frac{d}{2}}\r^{d-\alpha}{\rm e}^{-\r^2/4\lambda}\dl\nonumber\\
&=&2^{-\alpha}\pi^{-\frac{d}{2}}\sup_{\r>0}\int_0^\infty f_{1,\alpha}\left(\frac{\r^2}{4\tau}\right)\left(\frac{\r^2}{4\tau}\right)^{1-\frac{\alpha}{2}}\tau^{\frac{d-\alpha}{2}-1}{\rm e}^{-\tau}\,{\rm d}\tau. \label{alphaasy} 
\eea
From \rf{alphaasy}, the evident upper bound for $L_\alpha(d)$ is 
\bea
L_\alpha(d)&\le& 
2^{-\alpha} \pi^{-\frac{d}{2}}\sup_{x>0} f_{1,\alpha}(x) x^{1-\frac\alpha{2}} \times\int_0^\infty\tau^{\frac{d-\alpha}{2}-1}{\rm e}^{-\tau}\, {\rm d}\tau  \nonumber\\ 
&=& \tilde k(\alpha)\pi^{-\frac{d}{2}} \Gamma\left(\frac{d-\alpha}{2}\right) 
\nonumber\\
&=&\frac{2\tilde k(\alpha)}{\s_d}\frac{\Gamma\left(\frac{d-\alpha}{2}\right)}{\Gamma\left(\frac{d}{2}\right)}\nonumber\\
&\approx&2\tilde k(\alpha)\frac{1}{\s_d}d^{-\frac\alpha{2}}\label{alphaasy2}
\eea 
for some constant $\tilde k(\alpha)>0$ independent of $d$, $d\to\infty$, 
similarly as was in computations of \rf{s-alpha}, with the use of the Stirling formula  \rf{Stir}.

Now, we need an asymptotic lower bound for the quantity $L_\a(d)$. 
Observe that 
\bea
m\equiv\max_{\tau>0}{\rm e}^{-\tau}\tau^{\frac{d-\alpha}{2}-1} 
&=& {\rm e}^{-\tau_0}\tau_0^{\frac{d-\alpha}{2}-1}\ \ \ \ {\rm with}\ \ \ \  \tau_0=\left(\tfrac{d-\a}{2}-1\right) \nonumber\\ 
&=&{\rm e}^{-\frac{d-\alpha}{2}+1}\left(\frac{d-\alpha}{2}-1\right)^{\frac{d-\alpha}{2}-1}\nonumber\\
&\approx&\Gamma\left(\frac{d-\alpha}{2}\right)\frac{1}{\sqrt{\pi(d-\alpha-2)}}\label{min}
\eea
holds by \rf{Stir}. 
Now, let $h\asymp d^{\frac12}$.  
It is easy to check that 
 $$\frac{1}{m}\min_{[\tau_0,\tau_0+h]}{\rm e}^{-\tau}\tau^{\frac{d-\alpha}{2}-1} \ge \delta$$ 
 for some $\delta>0$, uniformly in $d$. 
Indeed, 
$$
\log\frac{(d+h)^d{\rm e }^{-d-h}}{d^d{\rm e}^{-d} }=d\log\left(1+\frac{h}{d}\right)-h\approx d\frac{h}{d}-\frac{dh^2}{2d^2}-h= {\mathcal O}\left(\frac{h^2}{2d}\right).
$$
From formulas \rf{alphaasy} and \rf{min} we infer 
\bea 
L_\alpha(d)&\ge& \pi^{-\frac{d}{2}}\sup_{\r>0} \int_{\tau_0}^{\tau_0+h}  f_{1,\alpha}\left(\frac{\r^2}{4\tau}\right)\left(\frac{\r^2}{4\tau}\right)^{1-\frac\alpha{2}}\tau^{\frac{d-\alpha}{2}-1}{\rm e}^{-\tau}\,{\rm d}\tau \nonumber\\
&\ge& \pi^{-\frac{d}{2}}\frac{\delta h}{\sqrt{d}}\Gamma\left(\frac{d-\alpha}{2}\right)\nonumber \\
&\approx&\pi^{-\frac{d}{2}}\delta \Gamma\left(\frac{d}{2}\right)d^{-\frac\alpha{2}}. \label{alphasy3}
\eea 
Therefore $L_\alpha(d)\ge \delta\frac{1}{\s_d}d^{-\frac\alpha{2}}$ holds. 
This is an estimate of optimal order and {\em different} from its counterpart  for $\alpha=2$.  
Remark  that if 
\be
\tilde\ell_\alpha(u_0)\equiv \sup_{t>0}t\left\|{\rm e}^{-t(-\Delta)^{\alpha/2}}u_0 \right\|_\infty,\label{l-a}
\ee
then  the comparison constants of $\tilde\ell_\a(\,.\,)$ with the $\da$-concentration $\xn\,.\,\xn_{\da}$ depend on $d$.

It is clear that if $NL_\alpha(d)\ge C_\alpha(d)$ then $N\ge {\mathcal N}$.
Thus, if the radial distribution function $M$ corresponding to the density $u_0$ satisfies 
$$
|\!\!| u_0\mnp\ge \xn u_0\xn_{\da} \ge r^{\alpha-d}M(r)>\frac{C_\alpha(d)}{L_\alpha(d)}\ \ {\rm for\ some\ \ }r>0,$$ 
then  the solution with $u_0$ as the  initial condition blows up in a finite time again by the comparison principle  \cite[Th. 2.4]{BKZ2}.  
Therefore,  by \rf{as-C-d-a}, we obtain that ${\mathcal N}=\frac{C_\alpha(d)}{L_\alpha(d)} \lesssim \s_d d^{\frac\alpha{2}}$ holds. 
\end{proof}

\begin{remark}[Examples of blowing up solutions]\label{comp-sing} 
Observe that for any initial condition $u_0\not\equiv 0$ there is $N>0$ such that \rf{bll} is satisfied for $Nu_0$. 

Similarly as was in Remark \ref{cxuC}   for $\a=2$, if  $u_0(x)=\eta u_C(x)$, then the sufficient condition for blowup \rf{bll} is satisfied for large $d$ whenever $\eta> \frac{1}{k(\alpha)}\frac{2d}{d-2}$. 
Indeed, it suffices to have $\eta>\frac{C_\alpha(d)}{K_\alpha(d)}$, and by relation  \rf{as-C-d-a} asymptotically  $\frac{C_\alpha(d)}{K_\alpha(d)}\lesssim \frac{1}{k(\alpha)}\frac{2d}{d-2}\approx \frac{2}{k(\a)}$ as $d\to\infty$. 

More generally than in  Remark  \ref{cxuC}, 
for each such $\eta$ and sufficiently large $R=R(\eta)>1$, the bounded initial condition of compact support $u_0=\eta\un_{\{1\le|x|\le R\}}u_C$ leads to a blowing up solution. 
It seems that this result cannot be obtained applying previous sufficient criteria for blowup involving moments in \cite{BK-JEE} and  in \cite[Th. 2.9]{BKZ2}.
\end{remark}

Taking into account \cite[Theorem 2.1]{BKZ},  all the above remarks on the critical  Morrey space and the $\da$-radial concentration, we formulate the following   dichotomy result 
\begin{corollary}\label{dichot}
Let $d\ge 2$, $\alpha\in(1,2]$ and $d+1>2\alpha$. 
There exist  
two positive constants $c(\alpha,d)$ and $C(\alpha,d)$ such that 
\begin{itemize}

\item[(i)] if $\alpha\in(1,2)$ and $\xn u_0\xn_{\frac{d}{\a}}<c(\a,d)$ then problem \rf{equ-a}--\rf{ini-a} has a~global-in-time solution;

\item[(ii)]
  $\xn u_0\xn_{\da} >C(\alpha,d)$ implies that each nonnegative radially symmetric  solution of problem \rf{equ-a}--\rf{ini-a} blows up in a finite time.
\end{itemize}  
\end{corollary}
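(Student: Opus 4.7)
The plan is to assemble Corollary \ref{dichot} from the global-existence theory that the paper has already surveyed and from the blowup criterion Theorem \ref{bl-a}, with the threshold constants supplied by Proposition \ref{comp-a}. Both halves reduce to the averaged comparison principle for the radial distribution function $M(r,t)$ governed by the (fractional analogue of) equation \rf{mass}.

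For part (i), I would rely on the comparison result \cite[Th. 2.4]{BKZ2}, which guarantees a global-in-time solution whenever the initial datum is dominated in the averaged sense by $\eps u_C$ for some $\eps<1$. For a nonnegative radial $u_0$, the radial distribution function $M(R,0)=\int_{\{|y|\le R\}}u_0(y)\dy$ encodes precisely the $\tfrac{d}{\a}$-radial concentration via $\xn u_0\xn_{\da}=\sup_{R>0}R^{\a-d}M(R,0)$, while a direct radial integration of $\eps u_C(x)=\eps s(\a,d)|x|^{-\a}$ gives the distribution function $\frac{\s_d s(\a,d)}{d-\a}\eps R^{d-\a}$. Choosing $c(\a,d)<\frac{\s_d s(\a,d)}{d-\a}$ therefore forces $M(R,0)\le \eps\cdot\frac{\s_d s(\a,d)}{d-\a}R^{d-\a}$ for some $\eps<1$ uniformly in $R$, and the averaged comparison principle delivers a global-in-time solution.

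For part (ii), I would combine Theorem \ref{bl-a} with the analysis performed in Proposition \ref{comp-a}. By the scaling invariance of \rf{equ-a}--\rf{eqv-a} one may reduce to the scale $R=1$. If $\xn u_0\xn_{\da}$ exceeds the threshold $\mathcal N=\frac{C_\alpha(d)}{L_\alpha(d)}$ identified in Proposition \ref{comp-a}, then there is a radius $R_0$ with $M(R_0,0)\ge\mathcal N R_0^{d-\a}$; the averaged comparison principle (\cite[Th. 2.4]{BKZ2} applied to \rf{mass} and its fractional analogue) pushes the solution above the blowing-up profile constructed from $N\un_{[1,\infty)}(r)$ in Proposition \ref{comp-a}, and Theorem \ref{bl-a} then forces finite-time blowup. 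Setting $C(\a,d):=\mathcal N$ and invoking the asymptotic estimate $\mathcal N\lesssim\s_d d^{\a/2}$ closes the statement.

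The main obstacle is the passage, in part (i), from a scalar bound on the radial concentration to an averaged pointwise bound on $M(R,0)$ that is in the form required by the cited comparison principle, and dually in part (ii) the verification that largeness of $\xn u_0\xn_{\da}$ at a single radius $R_0$ already triggers blowup via the same averaged comparison. Both amount to tracking the homogeneity $R^{d-\a}$ carefully across the transformations $u_0\leadsto M(\cdot,0)\leadsto \eps u_C$ and exploiting the monotonicity of \rf{mass} in its fractional version, so the real work is bookkeeping; the substantive input is Proposition \ref{comp-a} together with the existing small-data/comparison theory in \cite{BKZ2,BKP}.
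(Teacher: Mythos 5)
Your part (ii) is essentially the paper's argument: the paper takes $C(\a,d)=\frac{C_\alpha(d)}{L_\alpha(d)}$, and the blowup follows exactly as you describe, by scaling to $R=1$, comparing (in the averaged sense of \cite[Th.~2.4]{BKZ2}) with the datum $N\s_d^{-1}{\rm d}S$ on the unit sphere whose value of $\sup_{T>0}T{\rm e}^{-T(-\Delta)^{\a/2}}(\cdot)(0)$ equals $NL_\alpha(d)>C_\alpha(d)$, and then invoking Theorem \ref{bl-a}; this is precisely the last step of the proof of Proposition \ref{comp-a}.

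For part (i), however, your argument does not cover the stated range of parameters. You prove global existence by dominating $u_0$ by $\eps u_C$ with $\eps<1$, but the singular stationary solution $u_C(x)=s(\a,d)|x|^{-\a}$ exists only under the hypothesis $2\a<d$ of Proposition \ref{Chandra} (and \cite[Th.~2.4]{BKZ2} is formulated in that regime). The corollary claims (i) for all $d\ge 2$ with $d+1>2\a$, which includes cases such as $d=2$, $\a\in(1,3/2)$, or $d=3$, $\a\in(3/2,2)$, where $d\le 2\a<d+1$ and no such $u_C$ is available, so the comparison argument has nothing to compare with. The paper instead obtains (i) directly from the small-data global existence theorem \cite[Theorem~2.1]{BKZ} in the critical Morrey space $M^{d/\a}(\R^d)$ (equivalently, small $\da$-radial concentration for radial nonnegative data); it is from that theorem that the hypotheses $\a\in(1,2]$ and $d+1>2\a$ are inherited. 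Your comparison-with-$u_C$ route is still the right tool for the quantitative refinement the paper records after the corollary, namely $c(\a,d)\ge\xn u_C\xn_{\da}=\frac{\s_d s(\a,d)}{d-\a}\asymp\s_d d^{\frac{\a}{2}-1}$, which is only needed asymptotically as $d\to\infty$, where $2\a<d$ holds automatically. So: fix part (i) by citing the small-data well-posedness result for the full parameter range, and keep your $\eps u_C$ comparison as the lower bound on $c(\a,d)$ used in estimating the ratio $\frac{C(\a,d)}{c(\a,d)}\lesssim d^{\a/2}$.
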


This, together with \rf{alphaasy2},  shows that for $\alpha\in(1,2)$  the discrepancy between bounds of the $\da$-radial concentration  sufficient for either  global-in-time existence or  the finite time  blowup, i.e. $\frac{C(\alpha,d)}{c(\alpha,d)}$,  is of order $d^{\frac\alpha2}$, similarly as was established in \cite[Rem. 8.1]{BKZ2} using an analysis of moments of solutions defined with compactly supported weight functions.  
Indeed, $C(\alpha,d)=\frac{C_\alpha(d)}{L_\alpha(d)}$ and $c(\alpha,d)\ge \xn u_C\xn_{\da}=|\!\!| u_C|\!\!|_{M^{d/\alpha}}\asymp \s_dd^{\frac{\alpha}{2}-1}$.

In the case $\alpha=2$,  Proposition \ref{thr-2}   gives a better result: 
the discrepancy between the bounds of the radial concentration sufficient for either the  global-in-time existence or for the finite time  blowup  is of order $d^{\frac12}$, which improves the result in     \cite[Remark 8.1]{BKZ2} where this quotient has been shown to be of order $d$. 
\bigskip

\end{document}